\title{\sffamily Moreau's Decomposition in Banach Spaces}
\author{Patrick L. Combettes$^1$
and Noli N. Reyes$^2$ \\[5mm]
$\!^1$UPMC Universit\'e Paris 06\\
Laboratoire Jacques-Louis Lions -- UMR CNRS 7598\\
75005 Paris, France\\
{\ttfamily plc@math.jussieu.fr}\\[4mm]
$^2$University of the Philippines -- Diliman\\
Institute of Mathematics\\
Quezon City, 1101 Philippines\\
{\ttfamily noli@math.upd.edu.ph}}
\date{~}
\newcommand{\pair}[2]{\langle{{#1},{#2}}\rangle}
\newcommand{\bpair}[2]{\left\langle{{#1},{#2}}\right\rangle}
\newcommand{\menge}[2]{\big\{{#1}~\big |~{#2}\big\}} 
\newcommand{\Menge}[2]{\bigg\{{#1}~\bigg |~{#2}\bigg\}} 
\newcommand{\XX}{\ensuremath{\mathcal X}}
\newcommand{\XP}{\ensuremath{\mathcal X}^*}
\newcommand{\RX}{\ensuremath{\,\left]-\infty,+\infty\right]}}
\newcommand{\RXX}{\ensuremath{\,\left[-\infty,+\infty\right]}}
\newcommand{\IDD}{\ensuremath{\operatorname{int}\operatorname{dom}f}}
\newcommand{\IDP}{\ensuremath{\operatorname{int}\operatorname{dom}f^*}}
\newcommand{\intdom}{\ensuremath{\operatorname{int}\operatorname{dom}}\,}
\newcommand{\inte}{\ensuremath{\operatorname{int}}}
\newcommand{\dom}{\ensuremath{\operatorname{dom}}}
\newcommand{\Nf}{\ensuremath{\nabla f}}
\newcommand{\sri}{\ensuremath{{\operatorname{sri}}\,}}
\newcommand{\prox}{\ensuremath{{\operatorname{prox}}}}
\newcommand{\aprox}{\ensuremath{{\operatorname{aprox}}}}
\newcommand{\bprox}{\ensuremath{{\operatorname{bprox}}}}
\newcommand{\spc}{\ensuremath{\overline{\operatorname{span}}\,}}
\newcommand{\emp}{\ensuremath{{\varnothing}}}
\newcommand{\Id}{\ensuremath{\operatorname{Id}}\,}
\newcommand{\RR}{\ensuremath{\mathbb{R}}}
\newcommand{\infconv}{\ensuremath{\mbox{\footnotesize$\,\square\,$}}}
\newcommand{\ldiamond}{\ensuremath{\mbox{\Large$\,\diamond\,$}}}
\newcommand{\RPP}{\ensuremath{\left]0,+\infty\right[}}
\newcommand{\exi}{\ensuremath{\exists\,}}
\newcommand{\pinf}{\ensuremath{{+\infty}}}
\newcommand{\minf}{\ensuremath{{-\infty}}}
\newtheorem{theorem}{Theorem}[section]
\newtheorem{lemma}[theorem]{Lemma}
\newtheorem{corollary}[theorem]{Corollary}
\newtheorem{proposition}[theorem]{Proposition}
\theoremstyle{plain}{\theorembodyfont{\rmfamily}%
}
\theoremstyle{plain}{\theorembodyfont{\rmfamily}%
}
\theoremstyle{plain}{\theorembodyfont{\rmfamily}%
}
\theoremstyle{plain}{\theorembodyfont{\rmfamily}%
\newtheorem{remark}[theorem]{Remark}}
\theoremstyle{plain}{\theorembodyfont{\rmfamily}%
\newtheorem{definition}[theorem]{Definition}}
\theoremstyle{plain}{\theorembodyfont{\rmfamily}%
}
\numberwithin{equation}{section}
\begin{document}
\maketitle

\begin{abstract}
Moreau's decomposition is a powerful nonlinear hilbertian analysis 
tool that has been used in various areas of optimization and
applied mathematics. 
In this paper, it is extended to reflexive Banach spaces and in 
the context of generalized proximity measures. This extension 
unifies and significantly improves upon existing results.
\end{abstract}

\vskip 11mm

{\bfseries Key words.} 
Banach space,
Bregman distance,
convex optimization,
infimal convolution,
Legendre function,
Moreau's decomposition,
proximity operator.
\newpage

\section{Introduction}

Throughout this paper, $(\XX,\|\cdot\|)$ is a reflexive real Banach 
space with topological dual $(\XP,\|\cdot\|_*)$, and the canonical 
bilinear form on $\XX\times\XP$ is denoted by $\pair{\cdot}{\cdot}$. 
The distance function to a set $C\subset\XX$ is
$d_C\colon x\mapsto\inf_{y\in C}\|x-y\|$, the metric projector onto
$C$ is $P_C\colon x\mapsto\menge{y\in C}{\|x-y\|=d_C(x)}$, and the 
polar cone of $C$ is $C^\ominus=\menge{x^*\in\XX^*}
{(\forall x\in C)\;\pair{x}{x^*}\leq 0}$.
$\Gamma_0(\XX)$ is the class of lower semicontinuous convex 
functions $\varphi\colon\XX\to\RX$ such that
$\dom\varphi=\menge{x\in\XX}{\varphi(x)<\pinf}\neq\emp$.

A classical tool in linear hilbertian analysis is the following
orthogonal decomposition principle.

\begin{proposition}
\label{p:1}
Suppose that $\XX$ is a Hilbert space, let $V$ be a closed 
vector subspace of $\XX$ with orthogonal complement $V^\bot$, and 
let $x\in\XX$. Then the following hold.
\begin{enumerate}
\item
$\|x\|^2=d_V^2(x)+d_{V^\bot}^2(x)$.
\item
$x=P_Vx+P_{V^\bot}x$.
\item
$\pair{P_Vx}{P_{V^\perp}x}=0$.
\end{enumerate}
\end{proposition}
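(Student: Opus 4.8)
The plan is to deduce all three claims from two classical facts about Hilbert spaces: the projection theorem (a nonempty closed convex subset $C$ of $\XX$ admits, for each point, a unique nearest point, so that $P_C$ is a well-defined single-valued operator) and the orthogonal decomposition $\XX=V\oplus V^\bot$, valid for any closed subspace $V$. First I would invoke the projection theorem to guarantee that $P_Vx$ and $P_{V^\bot}x$ are genuine single points of $\XX$ (note that $V^\bot$ is automatically closed, being an orthogonal complement), which is what makes the equalities in (i)--(iii) meaningful in the first place.

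The core of the argument is the variational characterization of the projection onto a subspace: for a closed subspace $V$ one has $P_Vx\in V$ together with $x-P_Vx\in V^\bot$, and these two properties determine $P_Vx$ uniquely. I would set $v=P_Vx$ and $w=x-v$, so that $v\in V$, $w\in V^\bot$, and $x=v+w$ by construction. To obtain (ii) it then remains to recognize $w$ as $P_{V^\bot}x$. For this I would use the fact that, $V$ being closed, $(V^\bot)^\bot=V$; since $w\in V^\bot$ and $x-w=v\in V=(V^\bot)^\bot$, the same characterization applied to the subspace $V^\bot$ yields $w=P_{V^\bot}x$. Hence $x=P_Vx+P_{V^\bot}x$.

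Claim (iii) is then immediate, since $\pair{P_Vx}{P_{V^\bot}x}=\pair{v}{w}=0$ because $v\in V$ and $w\in V^\bot$. For (i) I would express the two distances in terms of the components: $d_V(x)=\|x-P_Vx\|=\|w\|$ and $d_{V^\bot}(x)=\|x-P_{V^\bot}x\|=\|v\|$. Since $v\perp w$, the Pythagorean identity gives $\|x\|^2=\|v+w\|^2=\|v\|^2+\|w\|^2=d_{V^\bot}^2(x)+d_V^2(x)$, which is exactly (i).

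The substantive content lies entirely in the two classical facts invoked at the outset, both of which rely on the completeness of $\XX$ and on the inner-product structure; once they are granted, the remaining manipulations are purely algebraic. The one step that requires a little care is the identification $w=P_{V^\bot}x$: it is tempting to assert it directly, but it genuinely needs the closedness of $V$ through the relation $(V^\bot)^\bot=V$, without which $x-w$ would only be known to lie in $\overline{V}$ rather than in $V$ itself.
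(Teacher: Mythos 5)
Your proof is correct, but note that the paper offers no proof of Proposition~\ref{p:1} to compare against: it is quoted as the classical orthogonal decomposition principle that motivates the subsequent nonlinear extensions, and its proof is left to the standard literature. Your route --- existence and uniqueness of nearest points on closed convex sets, the variational characterization $P_Vx\in V$ with $x-P_Vx\in V^\bot$, identification of the complementary summand as $P_{V^\bot}x$, and then the Pythagorean identity --- is the standard textbook argument and is sound. One small correction to your closing remark: to recognize $w=x-P_Vx$ as $P_{V^\bot}x$ you only need the inclusion $V\subset(V^\bot)^\bot$, which holds for an arbitrary subspace, not the equality $(V^\bot)^\bot=V$; indeed $x-w=P_Vx$ lies in $V$ by definition whenever it exists. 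The closedness of $V$ (together with completeness of $\XX$) is genuinely needed earlier, namely to guarantee that the infimum defining $d_V(x)$ is attained so that $P_Vx$ is a well-defined point at all; it is there, not in the polar-of-polar step, that the hypothesis does its work.
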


In 1962, Moreau proposed a nonlinear extension of this decomposition.

\begin{proposition} {\rm \cite{Mor62a}}
\label{p:2}
Suppose that $\XX$ is a Hilbert space, let $K$ be a nonempty 
closed convex cone in $\XX$, and let $x\in\XX$. Then the following 
hold.
\begin{enumerate}
\item
\label{p:2ii}
$\|x\|^2=d_K^2(x)+d_{K^\ominus}^2(x)$.
\item
\label{p:2i}
$x=P_Kx+P_{K^\ominus}x$.
\item
\label{p:2iii}
$\pair{P_Kx}{P_{K^\ominus}x}=0$.
\end{enumerate}
\end{proposition}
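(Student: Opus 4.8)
The plan is to build everything from the variational characterization of the metric projector onto a nonempty closed convex set in a Hilbert space: for such a set $C$, a point $p$ equals $P_Cx$ if and only if $p\in C$ and $\pair{y-p}{x-p}\leq 0$ for every $y\in C$. Since \XX\ is a Hilbert space I identify \XP\ with \XX, so that $K^\ominus$ is a subset of \XX\ and the polarity inequalities are expressed through the inner product $\pair{\cdot}{\cdot}$.

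First I would exploit the conical structure to sharpen this characterization. Writing $p=P_Kx$, the cone $K$ contains both $0$ and $2p$ (it is a nonempty closed convex cone, hence $0\in K$, and $p\in K$ gives $2p\in K$), so testing the variational inequality at $y=0$ and at $y=2p$ yields $\pair{p}{x-p}\geq 0$ and $\pair{p}{x-p}\leq 0$, forcing $\pair{p}{x-p}=0$. Substituting this equality back into $\pair{y-p}{x-p}\leq 0$ collapses it to $\pair{y}{x-p}\leq 0$ for all $y\in K$, i.e.\ $x-p\in K^\ominus$. Thus $p=P_Kx$ is characterized by the three simultaneous conditions $p\in K$, $x-p\in K^\ominus$, and $\pair{p}{x-p}=0$.

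Next I would set $q=x-p$ and show that $q=P_{K^\ominus}x$ by verifying the analogous three conditions for the cone $K^\ominus$. Two of them are immediate: $q\in K^\ominus$ by construction, and $\pair{x-q}{q}=\pair{p}{q}=0$. The remaining condition, $x-q=p\in(K^\ominus)^\ominus$, is where the bipolar theorem enters: for a nonempty closed convex cone one has $(K^\ominus)^\ominus=K$, so $p\in K$ supplies exactly what is needed, and the sufficiency direction of the characterization then identifies $q$ as $P_{K^\ominus}x$. This self-duality of the characterization is the heart of the argument, and the bipolar identity is the one genuinely nontrivial ingredient, so it is the step I expect to demand the most care.

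With $P_Kx=p$ and $P_{K^\ominus}x=q=x-p$ established, all three assertions drop out. Assertion (ii) is simply $x=p+q=P_Kx+P_{K^\ominus}x$; assertion (iii) is the orthogonality $\pair{p}{q}=0$ obtained above; and assertion (i) follows by expanding $\|x\|^2=\|p+q\|^2=\|p\|^2+\|q\|^2$ and observing that $d_K(x)=\|x-p\|=\|q\|$ while $d_{K^\ominus}(x)=\|x-q\|=\|p\|$.
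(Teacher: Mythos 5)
Your argument is correct, and it is essentially the classical direct proof rather than the route the paper takes. The paper never proves Proposition~2 from scratch: it cites Moreau and observes that the statement is the special case $\varphi=\iota_K$ of Proposition~3 (using $\iota_K^*=\iota_{K^\ominus}$ and $\prox_{\iota_K}=P_K$), which in turn is ultimately recovered from the general Banach-space decomposition of Theorem~3.1 via Corollary~3.6. Your proof instead works bottom-up from the variational inequality $p\in C$, $(\forall y\in C)\ \pair{y-p}{x-p}\leq 0$ characterizing $P_C$, sharpens it on a cone by testing at $y=0$ and $y=2p$ to get the three-condition characterization ($p\in K$, $x-p\in K^\ominus$, $\pair{p}{x-p}=0$), and then observes that this characterization is self-dual under $K\leftrightarrow K^\ominus$. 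This buys a short, self-contained Hilbert-space proof with no convex-conjugacy machinery; the paper's approach buys generality (reflexive Banach spaces, Legendre functions, general $\varphi$) at the cost of the heavier apparatus of Section~2. One small remark: you flag the bipolar identity $(K^\ominus)^\ominus=K$ as the nontrivial ingredient, but your argument only uses the inclusion $K\subset(K^\ominus)^\ominus$, which is immediate from the definition of the polar cone (if $p\in K$ then $\pair{p}{z}\leq 0$ for every $z\in K^\ominus$); the separation-theorem half of the bipolar theorem is never needed, so the proof is even more elementary than you claim.
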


Motivated by problems in unilateral mechanics, Moreau further
extended this result in \cite{Mor62b} (see also \cite{More65}). 
To state Moreau's decomposition principle, we require some basic 
notions from convex analysis \cite{Livre1,Zali02}. Let 
$\varphi$ and $f$ be two functions in $\Gamma_0(\XX)$. 
The conjugate of $\varphi$ is the function 
$\varphi^*$ in $\Gamma_0(\XX^*)$ defined by 
\begin{equation}
\label{e:elnido2011-03-02e}
\varphi^*\colon\XX^*\to\RX\colon x^*\mapsto
\sup_{x\in\XX}\big(\pair{x}{x^*}-\varphi(x)\big). 
\end{equation}
Moreover, the infimal convolution of $\varphi$ and $f$ is
the function 
\begin{equation}
\label{e:elnido2011-03-02f}
\varphi\infconv f\colon\XX\to\RXX\colon
x\mapsto\inf_{y\in\XX}\big(\varphi(y)+f(x-y)\big).
\end{equation}
Now suppose that $\XX$ is a Hilbert space and set
$q=(1/2)\|\cdot\|^2$. Then, for every $x\in\XX$, there exists
a unique point $p\in\XX$ such that 
$(\varphi\infconv q)(x)=\varphi(p)+q(x-p)$; this point is denoted
by $p=\prox_\varphi x$. 
The operator $\prox_\varphi\colon\XX\to\XX$ thus defined is 
called the proximity operator of $\varphi$. 

\begin{proposition}{\rm \cite{Mor62b,More65}}
\label{p:3}
Suppose that $\XX$ is a Hilbert space, let
$\varphi\in\Gamma_0(\XX)$, set $q=\|\cdot\|^2/2$, and let 
$x\in\XX$. Then the following hold.
\begin{enumerate}
\item
$q(x)=(\varphi\infconv q)(x)+(\varphi^*\infconv q)(x)$.
\item
\label{p:3i}
$x=\prox_\varphi x+\prox_{\varphi^*}x$.
\item
$\pair{\prox_\varphi x}{\prox_{\varphi^*}x}
=\varphi\big(\prox_\varphi x\big)+
\varphi^*\big(\prox_{\varphi^*}x\big)$.
\end{enumerate}
\end{proposition}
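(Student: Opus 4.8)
The plan is to anchor everything on the subdifferential characterization of the proximity operator and on the Fenchel--Young identity, so that the three assertions become essentially algebraic once the correct pairing is in place. Since $q=\|\cdot\|^2/2$ is Gateaux differentiable with $\nabla q=\Id$ (under the Riesz identification of $\XX$ with $\XP$), the optimality condition for the strictly convex minimization defining $\prox_\varphi x$ reads
\[
p=\prox_\varphi x\iff x-p\in\partial\varphi(p).
\]
First I would record this equivalence, which follows by writing $0\in\partial\big(\varphi+q(x-\cdot)\big)(p)$ and using that the gradient of $y\mapsto q(x-y)$ at $p$ equals $p-x$.

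Next I would prove (ii). Setting $p=\prox_\varphi x$ and $p^*=x-p$, the characterization gives $p^*\in\partial\varphi(p)$. Because $\varphi\in\Gamma_0(\XX)$, the subdifferential inversion formula $p^*\in\partial\varphi(p)\iff p\in\partial\varphi^*(p^*)$ applies; rewriting $p=x-p^*$ turns this into $x-p^*\in\partial\varphi^*(p^*)$, which is exactly the characterization identifying $p^*=\prox_{\varphi^*}x$. Hence $\prox_{\varphi^*}x=x-\prox_\varphi x$, i.e., (ii).

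Assertion (iii) is then immediate: since $p^*\in\partial\varphi(p)$, the Fenchel--Young inequality holds with equality, $\varphi(p)+\varphi^*(p^*)=\pair{p}{p^*}$, which upon substituting $p=\prox_\varphi x$ and $p^*=\prox_{\varphi^*}x$ is precisely (iii). Finally, (i) follows by unfolding the two infimal convolutions at their unique minimizers: $(\varphi\infconv q)(x)=\varphi(p)+q(p^*)$ and $(\varphi^*\infconv q)(x)=\varphi^*(p^*)+q(p)$, using $x-p=p^*$ and $x-p^*=p$. Adding these, grouping $\varphi(p)+\varphi^*(p^*)$, and invoking (iii), the sum collapses to $\pair{p}{p^*}+\tfrac12\|p\|^2+\tfrac12\|p^*\|^2=\tfrac12\|p+p^*\|^2=q(x)$.

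The only genuinely nontrivial ingredient is the subdifferential inversion $p^*\in\partial\varphi(p)\iff p\in\partial\varphi^*(p^*)$, which rests on $\varphi$ being in $\Gamma_0(\XX)$ and on reflexivity (so that $\varphi^{**}=\varphi$); everything else is the differentiability of $q$ and bookkeeping. I expect this inversion step, together with the clean identification of $x-\prox_\varphi x$ as $\prox_{\varphi^*}x$, to be the main conceptual point, while (i) and (iii) are short consequences of it.
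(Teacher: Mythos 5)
Your proof is correct. A point of comparison: the paper never proves Proposition~\ref{p:3} directly --- it is cited from Moreau and then \emph{recovered}, at the end of Section~\ref{sec:3}, as the Hilbert-space specialization ($f=q$, $\nabla f=\Id$) of Theorem~\ref{t:puerto-princesa2011-03-06} via Corollary~\ref{c:puerto-princesa2011-03-06}. Your treatment of (ii) and (iii) --- the optimality characterization $p=\prox_\varphi x\Leftrightarrow x-p\in\partial\varphi(p)$, the inversion $x-p\in\partial\varphi(p)\Leftrightarrow p\in\partial\varphi^*(x-p)$ (valid since $\varphi\in\Gamma_0(\XX)$ gives $\varphi^{**}=\varphi$), and Fenchel--Young equality on the graph of $\partial\varphi$ --- is exactly the mechanism in the paper's proof of Theorem~\ref{t:puerto-princesa2011-03-06}, items (ii)--(iii), stripped of the bookkeeping with $\nabla f$ and $\nabla f^*$; so there you are reproducing the paper's argument in miniature. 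Where you genuinely diverge is in (i): the paper derives the energy identity by a purely duality-theoretic computation, expanding the Bregman distance $D_{f^*}(\cdot,\nabla f(x))$ with Fenchel--Young and invoking the Attouch--Br\'ezis formula $(\varphi^*+f^*)^*=\varphi\infconv f$ (Lemma~\ref{l:9}), never touching the minimizers; you instead evaluate both infimal convolutions at their attained minimizers $p$ and $p^*=x-p$ and collapse the sum using $\|p\|^2+2\pair{p}{p^*}+\|p^*\|^2=\|x\|^2$. Your route is more elementary and perfectly sound here, but it leans on the exact quadratic expansion of $q(x-y)$, which is precisely what is unavailable for a general Legendre $f$ on a Banach space --- that is why the paper routes (i) through conjugacy. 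One mild presupposition in your write-up worth acknowledging explicitly: you take for granted that $\prox_\varphi x$ and $\prox_{\varphi^*}x$ exist and are unique (so that the two infimal convolutions are attained), which the paper grants in the Introduction and which, in the general setting, is the content of Propositions~\ref{p:2011-04-13} and~\ref{p:2011-04-14}.
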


Note that, if in Proposition~\ref{p:3} $\varphi$ is the indicator 
function of a nonempty closed convex cone $K\subset\XX$, i.e.,
$\varphi=\iota_K$ where
\begin{equation}
\label{e:iota}
(\forall x\in\XX)\quad\iota_K(x)=
\begin{cases}
0,&\text{if}\;\;x\in K;\\
\pinf,&\text{if}\;\;x\notin K,
\end{cases}
\end{equation}
we recover Proposition~\ref{p:2}.

The above hilbertian nonlinear decomposition principles have 
found many applications in optimization and in various other
areas of applied mathematics (see for instance 
\cite{Brog06,Coll79,Svva10,Siop07,Smms05,Hans84,Jbhu89,Hiri10,%
Luce10,Rock06} and the references therein)
and attempts have been made to extend them to more general Banach 
spaces. The main result in this direction is the following
generalization of Proposition~\ref{p:2}\ref{p:2i}\&\ref{p:2iii} 
in uniformly convex and uniformly smooth Banach spaces
(see also \cite{Albe05,Huyh11,Scho08,Song04} for alternate proofs 
and applications), where $\Pi_C$ denotes the generalized projector
onto a nonempty closed convex subset $C$ of $\XX$ \cite{Albe96}, 
i.e., if $J$ denotes the duality mapping of $\XX$,
\begin{equation}
\label{e:gproj}
(\forall x\in\XX)\quad\Pi_C
x=\underset{y\in C}{\operatorname{argmin}}
\big(\|x\|^2-2\pair{y}{Jx}+\|y\|^2\big).
\end{equation}

\begin{proposition}{\rm \cite{Albe00}}
\label{p:4}
Suppose that $\XX$ is uniformly convex and uniformly smooth,
let $J\colon\XX\to\XX^*$ denote its duality mapping, which is
characterized by
\begin{equation}
\label{e:dualitymap}
(\forall x\in\XX)\quad\|x\|^2=\pair{x}{Jx}=\|Jx\|^2_*,
\end{equation}
let $K$ be a nonempty closed convex cone in $\XX$, and let
$x\in\XX$. Then the following hold.
\begin{enumerate}
\item
\label{p:4i}
$x=P_Kx+ J^{-1}\big(\Pi_{K^\ominus}(Jx)\big)$.
\item
\label{p:4iii}
$\pair{P_Kx}{\Pi_{K^\ominus}(Jx)}=0$.
\end{enumerate}
\end{proposition}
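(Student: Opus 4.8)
The plan is to reduce both assertions to the single identity $\Pi_{K^\ominus}(Jx)=J(x-P_Kx)$. Once this is in hand, \ref{p:4i} follows by applying $J^{-1}$, and \ref{p:4iii} will fall out of an orthogonality relation produced along the way. Throughout I write $p=P_Kx$ and use freely that, since $\XX$ is uniformly convex and uniformly smooth, $J$ is a single-valued bijection of $\XX$ onto $\XP$ whose inverse is the duality mapping of $\XP$, and that both the metric projector $P_K$ and the generalized projector $\Pi_{K^\ominus}$ (the latter computed in $\XP$) are single-valued.

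First I would record the variational characterization of the metric projection: $p$ is the unique point of $K$ satisfying $\pair{y-p}{J(x-p)}\leq 0$ for every $y\in K$. This is the first-order optimality condition for minimizing over the convex set $K$ the convex, G\^ateaux differentiable function $y\mapsto\|x-y\|^2/2$, whose gradient at $p$ equals $-J(x-p)$. Because $K$ is a cone, testing the inequality with $y=0$ and with $y=2p$ forces $\pair{p}{J(x-p)}=0$, after which the inequality collapses to $\pair{y}{J(x-p)}\leq 0$ for all $y\in K$, that is, $J(x-p)\in K^\ominus$. These two facts, the membership $J(x-p)\in K^\ominus$ and the orthogonality $\pair{p}{J(x-p)}=0$, are the whole engine of the argument.

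It then remains to identify $q^*:=J(x-p)$ with $\Pi_{K^\ominus}(Jx)$. Since $\Pi_{K^\ominus}$ is the generalized projector in $\XP$, whose duality mapping is $J^{-1}$, its defining minimization over $K^\ominus$ yields the characterization that $q^*=\Pi_{K^\ominus}(Jx)$ if and only if $q^*\in K^\ominus$ and $\pair{J^{-1}(Jx)-J^{-1}(q^*)}{y^*-q^*}\leq 0$ for every $y^*\in K^\ominus$. Substituting $J^{-1}(Jx)=x$ and $J^{-1}(q^*)=x-p$, the left slot collapses to $p$, so the condition becomes $\pair{p}{y^*}\leq\pair{p}{q^*}$ for all $y^*\in K^\ominus$. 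By the orthogonality relation $\pair{p}{q^*}=\pair{p}{J(x-p)}=0$ this is just $\pair{p}{y^*}\leq 0$, which holds because $p\in K$ and $y^*\in K^\ominus$. Together with $q^*\in K^\ominus$, single-valuedness of the generalized projection yields $\Pi_{K^\ominus}(Jx)=q^*=J(x-p)$, which is \ref{p:4i}; and \ref{p:4iii} is then immediate, since $\pair{P_Kx}{\Pi_{K^\ominus}(Jx)}=\pair{p}{J(x-p)}=0$.

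The main difficulty I anticipate is bookkeeping rather than depth: one must keep track that $\Pi_{K^\ominus}$ is computed in the dual space $\XP$, so that the duality mapping underlying its variational characterization is $J^{-1}$ and not $J$, and one must place the pairings and signs in the correct slots when combining the two characterizations (for $P_K$ in $\XX$ and for $\Pi_{K^\ominus}$ in $\XP$). The single-valuedness of both projectors and the ``only if'' direction of their characterizations rest on the strict convexity furnished by uniform convexity and uniform smoothness, which I would invoke from \cite{Albe96,Albe00} rather than reprove.
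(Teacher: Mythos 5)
Your proof is correct, but it follows a genuinely different route from the paper. The paper never proves Proposition~\ref{p:4} directly: it is stated as a known result cited from \cite{Albe00}, and it is only recovered at the very end as a special case of the general decomposition Theorem~\ref{t:puerto-princesa2011-03-06}, by taking $f=q=\|\cdot\|^2/2$ in Corollary~\ref{c:puerto-princesa2011-03-06} and then $\varphi=\iota_K$, so that $\varphi^*=\iota_{K^\ominus}$, $\aprox^q_\varphi=P_K$, and $\bprox^{q^*}_{\varphi^*}=\Pi_{K^\ominus}$. You instead give a self-contained first-principles argument: the variational inequality $\pair{y-p}{J(x-p)}\leq 0$ on $K$, the cone trick with $y=0$ and $y=2p$ yielding $\pair{p}{J(x-p)}=0$ and $J(x-p)\in K^\ominus$, and then the matching variational inequality for $\Pi_{K^\ominus}$ in $\XP$ (correctly computed with the dual duality mapping $J^{-1}$) to identify $J(x-p)$ with $\Pi_{K^\ominus}(Jx)$; all steps check out, including the sufficiency of the first-order conditions by convexity and the uniqueness from strict convexity of the norms. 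What each approach buys: yours is elementary, transparent, and essentially the classical argument of Alber, making the two orthogonality facts $\pair{p}{J(x-p)}=0$ and $J(x-p)\in K^\ominus$ the visible engine; the paper's route requires the full machinery of Legendre functions, Bregman distances, and the Attouch--Br\'ezis conditions, but in exchange delivers the statement as one instance of a theorem valid for arbitrary $\varphi\in\Gamma_0(\XX)$ and arbitrary Legendre $f$, together with the additional energy identity of Theorem~\ref{t:puerto-princesa2011-03-06}\ref{t:puerto-princesa2011-03-06i} that has no counterpart in your argument.
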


The objective of the present paper is to unify and extend the
above results. To this end, we first discuss in Section~\ref{sec:2}
suitable notions of proximity in Banach spaces. Based on these, we
propose our extension of Moreau's decomposition in
Section~\ref{sec:3}. A feature of our analysis is to rely heavily
on convex analytical tools, which allows us to derive our main
result with simpler proofs than those utilized in the above
special case.

\section{Proximity in Banach spaces}
\label{sec:2}

Let $\varphi\in\Gamma_0(\XX)$. 
As seen in the Introduction, if $\XX$ is a Hilbert space, Moreau's 
proximity operator is defined by 
\begin{equation}
\label{e:2011-03-12a}
(\forall x\in\XX)\quad\prox_\varphi x=\underset{y\in\XX}
{\operatorname{argmin}}\Big(\varphi(y)+\frac12\|x-y\|^2\Big).
\end{equation}
In this section we discuss two extensions of this operator
in Banach spaces. We recall that $\varphi$ is coercive if
$\lim_{\|y\|\to\pinf}\varphi(y)=\pinf$ and supercoercive if 
$\lim_{\|y\|\to\pinf}\varphi(y)/\|y\|=\pinf$. 
As usual, the subdifferential operator of $\varphi$ is denoted by 
$\partial\varphi$. Finally, the strong relative interior of a 
convex set $C\subset\XX$ is
\begin{equation} 
\label{e:dsri}
\sri C=\Menge{x\in C}{\bigcup_{\lambda>0}\lambda(C-x)=\spc (C-x)}.
\end{equation}
We shall also require the following facts.

\begin{lemma}[\rm\cite{More64,Rock66}]
\label{l:7}
Let $f\in\Gamma_0(\XX)$ and let $x^*\in\XX^*$. Then $f-x^*$ is 
coercive if and only if $x^*\in\intdom f^*$.
\end{lemma}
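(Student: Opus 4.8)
The plan is to prove the equivalence by relating coercivity of $f-x^*$ directly to a Lipschitz-type boundedness property of $f^*$ near $x^*$, which is what membership in $\intdom f^*$ provides. The key observation is that $f-x^*$ being coercive is equivalent to all sublevel sets $\menge{y\in\XX}{f(y)-\pair{y}{x^*}\leq\alpha}$ being bounded, and that the conjugate $f^*$ controls exactly this kind of growth through the Young--Fenchel inequality $f(y)+f^*(z^*)\geq\pair{y}{z^*}$.

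First I would establish the forward direction ($f-x^*$ coercive $\Rightarrow x^*\in\intdom f^*$). Coercivity of $f-x^*$ gives, for some radius $\rho>0$ and some level $\beta\in\RR$, that $f(y)-\pair{y}{x^*}\geq\beta$ whenever $\|y\|\geq\rho$, and since $f\in\Gamma_0(\XX)$ is bounded below by an affine function, one gets a uniform lower bound $f(y)-\pair{y}{x^*}\geq\gamma$ for all $y\in\XX$. The point is to upgrade this to control $f^*$ at nearby points: for any $z^*$ with $\|z^*-x^*\|_*$ small, I would write $\pair{y}{z^*}-f(y)=\pair{y}{z^*-x^*}+\pair{y}{x^*}-f(y)$ and split the supremum over $\|y\|<\rho$ (where everything is bounded) and $\|y\|\geq\rho$ (where the coercivity rate dominates the linear term $\pair{y}{z^*-x^*}\leq\|y\|\|z^*-x^*\|_*$ once the perturbation is small enough). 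This yields $f^*(z^*)<\pinf$ on a neighborhood of $x^*$, i.e. $x^*\in\intdom f^*$.

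For the reverse direction, I would use that $f^*\in\Gamma_0(\XP)$ and that a convex function that is finite on a neighborhood of an interior point of its domain is locally bounded above there (this is standard in the reflexive Banach setting). So there is $\rho>0$ and $M\in\RR$ with $f^*(z^*)\leq M$ for all $\|z^*-x^*\|_*\leq\rho$. Applying Young--Fenchel, $f(y)\geq\pair{y}{z^*}-f^*(z^*)\geq\pair{y}{z^*}-M$ for every such $z^*$; taking the supremum over $z^*=x^*+\rho u^*$ with $\|u^*\|_*\leq 1$ and using $\sup_{\|u^*\|_*\leq 1}\pair{y}{u^*}=\|y\|$ gives $f(y)-\pair{y}{x^*}\geq\rho\|y\|-M$. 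The right-hand side tends to $\pinf$ as $\|y\|\to\pinf$, so $f-x^*$ is coercive (indeed supercoercive).

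I expect the main obstacle to be the reverse direction's reliance on local boundedness of the convex function $f^*$ from finiteness on a neighborhood; in infinite dimensions this requires the lower semicontinuity of $f^*$ together with a Baire-category or closed-graph argument rather than the elementary finite-dimensional fact. Since the paper cites \cite{More64,Rock66} for this lemma, I would invoke the standard result that a lower semicontinuous convex function is continuous, hence locally bounded, on the interior of its domain, and otherwise the argument is just a careful bookkeeping of the Young--Fenchel inequality and the dual-norm representation of $\|\cdot\|$.
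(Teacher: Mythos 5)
The paper offers no proof of this lemma---it is quoted from \cite{More64,Rock66}---so there is no internal argument to compare against; your route (Young--Fenchel plus the dual-norm representation of $\|\cdot\|$) is the standard one and the statement is true. However, the forward direction as written has a real gap. The decisive step is hidden in the phrase ``the coercivity rate dominates the linear term once the perturbation is small enough.'' Coercivity of $g=f-x^*$ in the paper's sense ($g(y)\to\pinf$ as $\|y\|\to\pinf$) does \emph{not} by itself imply that $\|y\|\,\|z^*-x^*\|_*-g(y)$ remains bounded above for small $\|z^*-x^*\|_*$: a function growing like $\log\|y\|$ is coercive yet is dominated by every nonzero linear term. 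What rescues the argument is convexity: for a coercive $g\in\Gamma_0(\XX)$ one must first establish a linear minorant $g\geq c\|\cdot\|-d$ with $c>0$ (take $y_0\in\dom g$ and $R>0$ with $g\geq g(y_0)+1$ outside the ball of radius $R$ about $y_0$; for $\|y-y_0\|>R$, comparing $g(y)$ with the value of $g$ at the point of that sphere lying on the segment $[y_0,y]$ gives $g(y)\geq g(y_0)+\|y-y_0\|/R$). Only with this linear growth in hand does your split supremum close and yield $f^*(z^*)<\pinf$ for all $\|z^*-x^*\|_*<c$. This linear-growth lemma for coercive convex functions is the one genuinely missing ingredient.

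The reverse direction is essentially fine: $f^*$ is automatically lower semicontinuous on the Banach space $\XP$, so continuity (hence local boundedness above) on $\intdom f^*$ is legitimate, and the estimate $f(y)-\pair{y}{x^*}\geq\rho\|y\|-M$ gives coercivity. But the parenthetical ``indeed supercoercive'' is false: dividing by $\|y\|$ yields a ratio tending to $\rho$, not to $\pinf$. Supercoercivity of $f-x^*$ corresponds to the much stronger condition $\dom f^*=\XX^*$ (compare Lemma~\ref{l:8}), not to $x^*\in\intdom f^*$; delete that claim, since it is neither needed nor true at this level of generality.
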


\begin{lemma}[\rm{\cite[Theorem~3.4]{Ccm101}}]
\label{l:8}
Let $f\in\Gamma_0(\XX)$ be supercoercive. Then 
$\dom f^*=\XX^*$.
\end{lemma}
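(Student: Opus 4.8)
The goal is to show that every $x^*\in\XX^*$ belongs to $\dom f^*$, i.e.\ that $f^*(x^*)=\sup_{x\in\XX}\big(\pair{x}{x^*}-f(x)\big)<\pinf$. The cleanest route is to invoke Lemma~\ref{l:7}, which already characterizes $\intdom f^*$ as the set of those $x^*$ for which $f-x^*$ is coercive. The plan is therefore to show that supercoercivity of $f$ forces $f-x^*$ to be coercive for \emph{every} $x^*$, whence $\intdom f^*=\XX^*$ and a fortiori $\dom f^*=\XX^*$.

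To carry this out, I would fix $x^*\in\XX^*$ and use the elementary dual-norm bound $\pair{y}{x^*}\leq\|y\|\,\|x^*\|_*$ to write, for $y\neq 0$,
\begin{equation*}
\frac{(f-x^*)(y)}{\|y\|}=\frac{f(y)-\pair{y}{x^*}}{\|y\|}
\geq\frac{f(y)}{\|y\|}-\|x^*\|_*.
\end{equation*}
Since $f$ is supercoercive, the right-hand side tends to $\pinf$ as $\|y\|\to\pinf$, so $f-x^*$ is itself supercoercive and, in particular, coercive. Lemma~\ref{l:7} then yields $x^*\in\intdom f^*$. As $x^*$ was arbitrary, $\intdom f^*=\XX^*$, and therefore $\dom f^*=\XX^*$.

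There is essentially no hard step once Lemma~\ref{l:7} is available: the argument collapses to the one-line estimate above. If instead one wanted a self-contained proof not relying on Lemma~\ref{l:7}, the only genuine obstacle would be bounding $\pair{x}{x^*}-f(x)$ from above on a ball $\{\|x\|\leq R\}$. Supercoercivity directly controls the supremum over $\{\|x\|>R\}$: choosing $R$ so large that $f(x)\geq(\|x^*\|_*+1)\|x\|$ there forces $\pair{x}{x^*}-f(x)\leq-\|x\|<0$. The bounded part, however, requires knowing that $f$ is bounded below on bounded sets, which follows from the standard fact that any function in $\Gamma_0(\XX)$ admits a continuous affine minorant $\pair{\cdot}{u^*}-\alpha$ (take $u^*\in\dom f^*$, nonempty since $f^*$ is proper). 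This is precisely the ingredient that the appeal to Lemma~\ref{l:7} lets us bypass.
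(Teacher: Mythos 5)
Your argument is correct. Note first that the paper itself offers no proof of this lemma: it is quoted verbatim from \cite[Theorem~3.4]{Ccm101}, so there is no in-paper argument to compare against. Your derivation is a legitimate and economical way to obtain the statement from material already present in the paper: the bound $\pair{y}{x^*}\leq\|y\|\,\|x^*\|_*$ gives $(f-x^*)(y)/\|y\|\geq f(y)/\|y\|-\|x^*\|_*\to\pinf$, so $f-x^*$ is supercoercive, hence coercive, and Lemma~\ref{l:7} (itself an external fact from \cite{More64,Rock66}, so there is no circularity) places $x^*$ in $\intdom f^*$; since $x^*$ was arbitrary this even yields the slightly stronger conclusion $\intdom f^*=\XX^*$. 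Your closing remark about a self-contained variant is also sound: the supremum of $\pair{x}{x^*}-f(x)$ over $\{\|x\|>R\}$ is controlled directly by supercoercivity, and over the ball one uses that $f$, being proper, lower semicontinuous, and convex, admits a continuous affine minorant, so it is bounded below on bounded sets. Either route is acceptable; the one through Lemma~\ref{l:7} is the shorter of the two given what the paper already assumes.
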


\begin{lemma}[\rm\cite{Atto86}]
\label{l:9}
Let $f$ and $\varphi$ be functions in $\Gamma_0(\XX)$ such that 
$0\in\sri(\dom f-\dom\varphi)$. Then the following hold.
\begin{enumerate}
\item
\label{l:9i}
$(\varphi+f)^*=\varphi^*\infconv f^*$ and the infimal convolution 
is exact everywhere: $(\forall x^*\in\XX^*)(\exi y^*\in\XX^*)$
$(\varphi+f)^*(x^*)=\varphi^*(y^*)+f^*(x^*-y^*)$.
\item
\label{l:9ii}
$\partial(\varphi+f)=\partial\varphi+\partial f$.
\end{enumerate}
\end{lemma}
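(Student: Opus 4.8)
The plan is to prove (i) first and deduce (ii) from it by a short Fenchel--Young argument, while isolating the only genuinely hard point of (i), namely the \emph{exactness} of the infimal convolution, which I will recast as the nonemptiness of a single subdifferential. The inequality $(\varphi+f)^*\le\varphi^*\infconv f^*$ holds with no qualification: for every $x^*,y^*\in\XP$ and every $x\in\XX$, the Young inequalities for $\varphi$ at $y^*$ and for $f$ at $x^*-y^*$ give $\varphi^*(y^*)+f^*(x^*-y^*)\ge\pair{x}{x^*}-\varphi(x)-f(x)$; taking the supremum over $x$ and then the infimum over $y^*$ yields the claim. So the whole content of (i) is the reverse inequality together with attainment. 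Fixing $x^*$ and replacing $\varphi$ by $\varphi-\pair{\cdot}{x^*}$ (still in $\Gamma_0(\XX)$, with the same domain, and merely translating $\varphi^*$) reduces everything to $x^*=0$; if $(\varphi+f)^*(0)=\pinf$ the easy inequality makes attainment trivial, so I may assume $\mu:=\inf_{x\in\XX}(\varphi(x)+f(x))=-(\varphi+f)^*(0)$ is finite, noting $\mu<\pinf$ because $0\in\sri(\dom f-\dom\varphi)$ forces $\dom\varphi\cap\dom f\neq\emp$.

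Next I would introduce the marginal function $p\colon\XX\to\RXX\colon u\mapsto\inf_{x\in\XX}\big(\varphi(x)+f(x+u)\big)$, which is convex with $p(0)=\mu$ and $\dom p=\dom f-\dom\varphi$. A direct substitution $z=x+u$ gives $p^*\colon y^*\mapsto\varphi^*(-y^*)+f^*(y^*)$. Hence the Fenchel equality $p(0)+p^*(y^*)=0$, that is $y^*\in\partial p(0)$, reads $\varphi^*(-y^*)+f^*(y^*)=-\mu=(\varphi+f)^*(0)$, which is precisely attainment of $\varphi^*\infconv f^*$ at $0$ with the splitting $w^*=-y^*$. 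Thus (i) is \emph{equivalent} to $\partial p(0)\neq\emp$.

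The heart of the proof is therefore to establish $\partial p(0)\neq\emp$, and this is where the qualification hypothesis and the completeness of $\XX$ enter. By \eqref{e:dsri} the assumption $0\in\sri\dom p$ means that $Y:=\spc(\dom f-\dom\varphi)=\bigcup_{\lambda>0}\lambda\,\dom p$ is a closed subspace, so $0$ is an absorbing (core) point of $\dom p$ inside the Banach space $Y$. Working in $Y$, I would pass to a lower semicontinuous convex minorant of $p$ agreeing with it at $0$ and write $Y=\bigcup_{n\in\NN}\{u\in Y:\ p(u)\le n\}$; a Baire category argument, for which completeness is indispensable, produces a point at which some sublevel set has nonempty interior relative to $Y$, and a symmetrization-and-convexity step then places $0$ in the $Y$-interior of $\dom p$ with $p$ bounded above near $0$. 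A proper convex function bounded above on a neighborhood is continuous there, so $p$ is continuous at $0$ relative to $Y$ and hence has a subgradient $y_0^*\in Y^*$; extending $y_0^*$ to $\XP$ by Hahn--Banach gives the required element of $\partial p(0)$. This Baire/continuity passage is the main obstacle: unlike the classical Moreau--Rockafellar condition it cannot invoke nonempty interior of $\dom\varphi$ or $\dom f$, and it is exactly the weaker $\sri$ hypothesis that must be converted into genuine topological continuity by exploiting completeness.

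Finally, (ii) follows from (i). The inclusion $\partial\varphi+\partial f\subseteq\partial(\varphi+f)$ is immediate from the definition of the subdifferential. Conversely, let $x^*\in\partial(\varphi+f)(x)$, so $\varphi(x)+f(x)+(\varphi+f)^*(x^*)=\pair{x}{x^*}$. Using the exact decomposition from (i), pick $y^*$ with $(\varphi+f)^*(x^*)=\varphi^*(y^*)+f^*(x^*-y^*)$; then $\big(\varphi(x)+\varphi^*(y^*)-\pair{x}{y^*}\big)+\big(f(x)+f^*(x^*-y^*)-\pair{x}{x^*-y^*}\big)=0$, and since both bracketed terms are nonnegative by Fenchel--Young they must vanish, giving $y^*\in\partial\varphi(x)$ and $x^*-y^*\in\partial f(x)$, whence $x^*\in\partial\varphi(x)+\partial f(x)$.
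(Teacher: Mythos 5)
The paper does not prove this lemma---it is quoted verbatim from Attouch--Br\'ezis \cite{Atto86}---so the only meaningful comparison is with the argument of that reference, which your outline correctly identifies in broad strokes: reduce to $x^*=0$, introduce the marginal function $p\colon u\mapsto\inf_{x\in\XX}(\varphi(x)+f(x+u))$, observe that $p^*(y^*)=\varphi^*(-y^*)+f^*(y^*)$ so that exactness at $0$ is equivalent to $\partial p(0)\neq\emp$, and obtain the latter from continuity of $p$ at $0$ relative to the closed subspace $Y=\spc(\dom f-\dom\varphi)$ via Baire category. That reduction, and your derivation of (ii) from (i) by splitting the Fenchel--Young equality into two nonnegative terms, are correct.

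The gap is in the one step you yourself flag as the heart of the proof. First, the identity $Y=\bigcup_{n\in\NN}\{u\in Y: p(u)\le n\}$ is false as written: the sublevel sets of $p$ cover only $\dom p=\dom f-\dom\varphi$, which may be a proper subset of $Y$; the hypothesis only gives $Y=\bigcup_{\lambda>0}\lambda\,\dom p$, so the countable cover must be taken by the \emph{dilates} $n\{p\le n\}$ (which does work, using $p(0)<\pinf$ and convexity). Second, and more seriously, Baire's theorem applied to any such cover yields only that the \emph{closure} of some set in the family has nonempty interior in $Y$. The sublevel sets of $p$ are not closed---indeed, lower semicontinuity of $p$ at $0$ is essentially equivalent to what you are trying to prove---so the symmetrization step leaves you with $0\in\inte_Y\overline{\{p\le N\}}$, from which ``$p$ bounded above near $0$'' does not follow. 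This is exactly where Attouch--Br\'ezis replace the sublevel sets of $p$ by the sets $C_n=\{u:(\exists\,x\in\XX)\;\|x\|\le n,\ \varphi(x)\le n,\ f(x+u)\le n\}$: the extra bound $\|x\|\le n$ on the witness makes $C_n$ cs-closed (ideally convex), and for such sets in a Banach space $\inte\overline{C_n}=\inte C_n$, which is the lemma that converts the category argument into genuine boundedness of $p$ above on a neighborhood of $0$ in $Y$. Relatedly, the appeal to ``a lower semicontinuous convex minorant of $p$ agreeing with it at $0$'' is circular: the natural candidate is the lsc hull $\bar p$, and $\bar p(0)=p(0)$ is again lower semicontinuity of $p$ at $0$; in the correct proof one establishes boundedness above of $p$ itself near $0$ and concludes continuity, with no minorant needed. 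Until the closure-versus-set issue is resolved by some such device, the central claim $\partial p(0)\neq\emp$ is not proved.
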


\subsection{Legendre functions}
We review the notion of a Legendre function, which was 
introduced in Euclidean spaces in \cite{Rock70} and extended to 
Banach spaces in \cite{Ccm101} (see also \cite{Borw01} for 
further developments in the nonreflexive case).

\begin{definition}{\rm\cite[Definition~5.2]{Ccm101}}
\label{d:legendre}
Let $f\in\Gamma_0(\XX)$. Then $f$ is:
\begin{enumerate}
\item 
essentially smooth, if $\partial f$ is both locally
bounded and single-valued on its domain;
\item 
essentially strictly convex, if $(\partial f)^{-1}$ is locally 
bounded on its domain and $f$ is strictly convex on every
convex subset of $\dom\partial f$;
\item 
a Legendre function, if it is both essentially smooth and
essentially strictly convex.
\end{enumerate}
\end{definition}

Some key properties of Legendre functions are listed below.

\begin{lemma}
\label{l:5}
Let $f\in\Gamma_0(\XX)$ be a Legendre function. 
Then the following hold.
\begin{enumerate}
\item
\label{l:5i}
$f^*$ is a Legendre function {\rm\cite[Corollary~5.5]{Ccm101}}.
\item
\label{l:5ii}
$\dom\partial f=\IDD\neq\emp$ and $f$ is G\^ateaux differentiable 
on $\IDD$ {\rm\cite[Theorem~5.6]{Ccm101}}.
\item
\label{l:5iii}
$\nabla f\colon\IDD\to\IDP$ is bijective with inverse
$\nabla f^*\colon\IDP\to\IDD$ {\rm\cite[Theorem~5.10]{Ccm101}}.
\end{enumerate}
\end{lemma}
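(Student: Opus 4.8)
The plan is to organize all three parts around the single conjugacy relation
\[
\partial f^*=(\partial f)^{-1},\quad\text{i.e.}\quad(\forall x\in\XX)(\forall x^*\in\XP)\quad x^*\in\partial f(x)\iff x\in\partial f^*(x^*),
\]
which holds for every $f\in\Gamma_0(\XX)$ because $\XX$ is reflexive and $f^{**}=f$. This identity lets me trade single-valuedness of $\partial f$ against injectivity of $\partial f^*$, transport local boundedness from one operator to its inverse, and read off the inverse map required in (iii) directly.

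For (i) I would establish that $f$ is essentially smooth if and only if $f^*$ is essentially strictly convex, together with the mirror statement obtained by exchanging $f$ and $f^*$; by Definition~\ref{d:legendre} these combine into ``$f$ is Legendre $\iff f^*$ is Legendre'', which contains the assertion. For the forward equivalence, single-valuedness of $\partial f$ on $\dom\partial f$ amounts, through $\partial f=(\partial f^*)^{-1}$, to injectivity of $\partial f^*$, and a convex function with injective subdifferential is strictly convex on every segment contained in the domain of that subdifferential; meanwhile the local boundedness of $\partial f$ is, verbatim, the local boundedness of $(\partial f^*)^{-1}$ demanded of $f^*$. The mirror equivalence follows symmetrically, using that strict convexity of $f$ along segments in $\dom\partial f$ forces $\partial f$ to be injective, whence $\partial f^*$ is single-valued, together with $f^{**}=f$.

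For (ii), essential smoothness makes $\partial f$ single-valued on $\dom\partial f$. I would first show $\dom\partial f=\IDD$; since such points are interior to $\dom f$, a singleton subdifferential there is equivalent to G\^ateaux differentiability, yielding differentiability of $f$ throughout $\IDD$. The inclusion $\IDD\subseteq\dom\partial f$ is standard, $f$ being continuous and hence subdifferentiable on $\inte\dom f$. For the reverse inclusion I would show $\partial f$ is empty at every boundary point of $\dom f$: were $x\in\dom\partial f$ with $x\notin\IDD$, letting points of $\dom f$ tend to $x$ would force the corresponding subgradients to be unbounded, contradicting the local boundedness clause of essential smoothness. Since $\dom\partial f$ is always nonempty, being dense in $\dom f$ by the Br\o ndsted--Rockafellar theorem, the inclusion just proved also gives $\IDD\neq\emp$. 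Part (iii) is then immediate: $\nabla f$ is the single-valued selection of $\partial f$ on $\IDD=\dom\partial f$, and applying (ii) to the Legendre function $f^*$ (available through (i)) shows $\nabla f^*$ is the single-valued selection of $\partial f^*$ on $\IDP=\dom\partial f^*$; the relation $\partial f^*=(\partial f)^{-1}$ says precisely that $y^*=\nabla f(x)\iff x=\nabla f^*(y^*)$, so $\nabla f$ maps $\IDD$ bijectively onto $\IDP$ with inverse $\nabla f^*$.

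The genuine obstacle is the boundary inclusion $\dom\partial f\subseteq\IDD$ used in (ii): showing that essential smoothness rules out subgradients at boundary points of $\dom f$ is exactly where the local-boundedness clause of Definition~\ref{d:legendre}, rather than mere single-valuedness of $\partial f$, becomes indispensable, and where reflexivity enters most tangibly. Once this is in hand, the rest is bookkeeping organized around the conjugacy relation.
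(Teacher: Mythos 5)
The paper does not actually prove this lemma: all three parts are imported verbatim from \cite{Ccm101} (Corollary~5.5, Theorems~5.6 and 5.10), so there is no in-paper argument to compare against. Judged on its own terms, your architecture is the standard one and is essentially how the cited source proceeds: in a reflexive space $f^{**}=f$, so $\partial f^*=(\partial f)^{-1}$, single-valuedness of $\partial f$ translates into injectivity of $\partial f^*$ and hence into strict convexity of $f^*$ on convex subsets of $\dom\partial f^*$ (and conversely), while the two local-boundedness clauses of Definition~\ref{d:legendre} are literally self-dual under conjugation; this gives (i), and (iii) is indeed pure bookkeeping once (i) and (ii) are in hand. The density of $\dom\partial f$ via Br{\o}ndsted--Rockafellar and the equivalence of G\^ateaux differentiability with a singleton subdifferential at a point of continuity are also correctly placed.

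The gap is the one you flagged yourself, and it is not minor: the inclusion $\dom\partial f\subseteq\IDD$ for an essentially smooth $f$. ``Letting points of $\dom f$ tend to $x$ would force the corresponding subgradients to be unbounded'' is an assertion, not an argument --- nothing in the definition of the subdifferential manufactures large subgradients near a non-interior point, and such a point need not even support a nonzero normal functional to $\dom f$ without invoking Bishop--Phelps. The actual proof runs through Rockafellar's local boundedness theorem for the maximal monotone operator $\partial f$: for $x\in\overline{\dom\partial f}$, $\partial f$ is locally bounded at $x$ if and only if $x\in\inte\big(\overline{\operatorname{co}}\,\dom\partial f\big)$; one then combines this with $\overline{\dom\partial f}=\overline{\dom f}$ and with the identification $\inte\overline{\dom f}=\IDD$, which itself requires an argument in the degenerate case where $\IDD$ could a priori be empty while $\inte\overline{\dom f}$ is not (e.g., $\dom f$ a dense convex set). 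This is precisely the substance of \cite[Theorem~5.6]{Ccm101} and cannot be waved through; until it is supplied, (ii) --- and with it the conclusion $\IDD\neq\emp$ and the domain identifications on which (iii) rests --- remains unproved.
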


\subsection{$D$-proximity operators}

In this subsection we discuss a notion of proximity based on
Bregman distances investigated in \cite{Sico03} and which goes back
to \cite{Cens92,Tebo92}.

The first extension of \eqref{e:2011-03-12a} was investigated in 
\cite{Sico03}. Let $f\in\Gamma_0(\XX)$ be a Legendre function. 
The Bregman distance associated with $f$ is
\begin{equation}
\label{e:elnido2011-03-05}
\begin{aligned}
D_f\colon\XX\times\XX&\to\,[0,\pinf]\\
(y,x)&\mapsto 
\begin{cases}
f(y)-f(x)-\pair{y-x}{\Nf(x)},&\text{if}\;\;x\in\IDD;\\
\pinf,&\text{otherwise}.
\end{cases}
\end{aligned}
\end{equation}
For every $\varphi\in\Gamma_0(\XX)$, we define the function
$\varphi\ldiamond f\colon\XX\to\RXX$ by
\begin{equation}
\label{e:inf-convo-breg} 
(\forall x\in\XX)\quad
(\varphi\ldiamond f)(x)=\inf_{y\in\XX}\big(\varphi(y)+
D_f(y,x)\big).
\end{equation}

The following proposition refines and complements some results of
\cite[Section~3.4]{Sico03}.

\begin{proposition}
\label{p:2011-04-13}
Let $f\in\Gamma_0(\XX)$ be a Legendre function, let
$\varphi\in\Gamma_0(\XX)$ be such that 
\begin{equation}
\label{e:2011-04-13a}
0\in\sri(\dom f-\dom\varphi),
\end{equation}
and let $x\in\intdom f$.
Suppose that one of the following holds.
\begin{enumerate}
\item
\label{p:2011-04-13i}
$\Nf(x)\in\inte(\dom f^*+\dom\varphi^*)$.
\item
\label{p:2011-04-13ii}
$\intdom f^*\subset\inte(\dom f^*+\dom\varphi^*)$.
\item
\label{p:2011-04-13iii}
$f$ is supercoercive.
\item
\label{p:2011-04-13iv}
$\inf\varphi(\XX)>\minf$.
\end{enumerate}
Then there exists a unique point $p\in\XX$ such that 
$(\varphi\ldiamond f)(x)=\varphi(p)+D_f(p,x)$; 
moreover, $p$ lies in $\dom\partial\varphi\cap\IDD$ and it is 
characterized by the inclusion
\begin{equation}
\label{e:2011-04-13b}
\Nf(x)-\Nf(p)\in\partial\varphi(p).
\end{equation}
\end{proposition}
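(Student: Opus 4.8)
My plan is to reduce the problem to a standard convex minimization and then read off everything from conjugate duality. Since $x\in\IDD$, for every $y\in\XX$ we have $D_f(y,x)=f(y)-f(x)-\pair{y-x}{\Nf(x)}$, and the terms $f(x)$ and $\pair{x}{\Nf(x)}$ are constants. Hence $y\mapsto\varphi(y)+D_f(y,x)$ and the function $h\colon y\mapsto(\varphi+f)(y)-\pair{y}{\Nf(x)}$ share the same minimizers. Moreover $\varphi+f\in\Gamma_0(\XX)$, because \eqref{e:2011-04-13a} yields $0\in\dom f-\dom\varphi$, so that $\dom\varphi\cap\dom f\neq\emp$. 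Thus the whole statement reduces to showing that $h$ has a unique minimizer $p$, that $p\in\dom\partial\varphi\cap\IDD$, and that $p$ satisfies \eqref{e:2011-04-13b}.

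For existence I would use that $p$ minimizes $h$ if and only if $0\in\partial h(p)$, i.e.\ $\Nf(x)\in\partial(\varphi+f)(p)$, equivalently (by the conjugacy of subgradients, valid since $\XX$ is reflexive) $p\in\partial(\varphi+f)^*(\Nf(x))$. Hence a minimizer exists as soon as $(\varphi+f)^*$ is subdifferentiable at $\Nf(x)$, and for this it suffices that $\Nf(x)$ lie in the interior of $\dom(\varphi+f)^*$, where a function in $\Gamma_0(\XP)$ is continuous and therefore subdifferentiable. By Lemma~\ref{l:9}\ref{l:9i}, $(\varphi+f)^*=\varphi^*\infconv f^*$ with the infimal convolution exact everywhere; exactness, together with properness of $(\varphi+f)^*$, forces $\dom(\varphi+f)^*=\dom\varphi^*+\dom f^*$. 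Consequently condition~\ref{p:2011-04-13i} is precisely the requirement $\Nf(x)\in\inte\dom(\varphi+f)^*$, and it produces the minimizer directly.

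Next I would show that each of \ref{p:2011-04-13ii}--\ref{p:2011-04-13iv} reduces to \ref{p:2011-04-13i}. Since $x\in\IDD$, Lemma~\ref{l:5}\ref{l:5iii} gives $\Nf(x)\in\IDP$, so \ref{p:2011-04-13ii} immediately yields $\Nf(x)\in\inte(\dom f^*+\dom\varphi^*)$. If $f$ is supercoercive, Lemma~\ref{l:8} gives $\dom f^*=\XX^*$, whence $\dom f^*+\dom\varphi^*=\XX^*$ (as $\dom\varphi^*\neq\emp$) and \ref{p:2011-04-13i} holds trivially. Finally, if $\inf\varphi(\XX)>\minf$, then $\varphi^*(0)=-\inf\varphi(\XX)<\pinf$, so $0\in\dom\varphi^*$ and therefore $\IDP\subset\inte(\dom f^*+\dom\varphi^*)$, which is \ref{p:2011-04-13ii}. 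Thus in all four cases a minimizer $p$ exists.

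For the characterization and the location of $p$, I would invoke the sum rule of Lemma~\ref{l:9}\ref{l:9ii}: from $\Nf(x)\in\partial(\varphi+f)(p)=\partial\varphi(p)+\partial f(p)$ we obtain $p\in\dom\partial\varphi\cap\dom\partial f$. Since $f$ is Legendre, $\dom\partial f=\IDD$ and $\partial f(p)=\{\Nf(p)\}$ by Lemma~\ref{l:5}\ref{l:5ii}, so that $\Nf(x)-\Nf(p)\in\partial\varphi(p)$, which is \eqref{e:2011-04-13b}, and $p\in\dom\partial\varphi\cap\IDD$. Uniqueness then follows from essential strict convexity of $f$: any two minimizers lie in the convex set $\IDD$, on which $f$ is strictly convex, so $h$ is strictly convex along the segment joining them and they must coincide. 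The step I expect to require the most care is the existence argument, specifically justifying the identity $\dom(\varphi+f)^*=\dom\varphi^*+\dom f^*$ from exactness of the infimal convolution and then the subdifferentiability of $(\varphi+f)^*$ at an interior point of its domain; once this is in place, the remainder is a matter of matching each hypothesis to condition~\ref{p:2011-04-13i}.
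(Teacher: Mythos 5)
Your proof is correct. The overall architecture coincides with the paper's: you reduce to minimizing the tilted function $(\varphi+f)-\Nf(x)$, you derive the characterization \eqref{e:2011-04-13b} and the membership $p\in\dom\partial\varphi\cap\IDD$ from the sum rule of Lemma~\ref{l:9}\ref{l:9ii} together with Lemma~\ref{l:5}\ref{l:5ii}, you obtain uniqueness from essential strict convexity on $\IDD$, and you reduce conditions \ref{p:2011-04-13ii}--\ref{p:2011-04-13iv} to \ref{p:2011-04-13i} exactly as the paper does. The one genuinely different ingredient is the existence mechanism. The paper converts the condition $\Nf(x)\in\inte(\dom f^*+\dom\varphi^*)=\intdom(\varphi+f)^*$ into coercivity of $(\varphi+f)-\Nf(x)$ via Lemma~\ref{l:7} and then invokes the standard Weierstrass-type existence result for coercive functions in $\Gamma_0(\XX)$ on a reflexive space. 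You instead note that a function in $\Gamma_0(\XX^*)$ is continuous, hence subdifferentiable, on the interior of its domain, and recover the minimizer as an element of $\partial(\varphi+f)^*(\Nf(x))$ through the inversion $(\partial g)^{-1}=\partial g^*$ (valid by reflexivity and Fenchel--Moreau). Both routes hinge on the same identity $\dom(\varphi+f)^*=\dom\varphi^*+\dom f^*$ supplied by Lemma~\ref{l:9}\ref{l:9i}, and your justification of that identity from exactness of the infimal convolution is sound; your variant has the small advantage of producing the minimizer directly as a subgradient rather than passing through an abstract existence theorem, at the cost of invoking continuity of conjugates on the interior of their domains.
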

\begin{proof}
Set $f_x\colon\XX\to\RX\colon y\mapsto f(y)-\pair{y}{\Nf(x)}$. 
Then the minimizers of $\varphi+D_f(\cdot,x)$ coincide with those 
of $\varphi+f_x$ and our assumptions imply that 
\begin{equation}
\label{e:2011-04-13e}
\varphi+f_x\in\Gamma_0(\XX). 
\end{equation}
Now let $p\in\XX$. 
It follows from \eqref{e:2011-04-13a}, Lemma~\ref{l:9}\ref{l:9ii},
and Lemma~\ref{l:5}\ref{l:5ii} that
\begin{eqnarray}
(\varphi\ldiamond f)(x)=\varphi(p)+D_f(p,x)
&\Leftrightarrow&p\;\:\text{minimizes}\;\:\varphi+f_x
\nonumber\\
&\Leftrightarrow&0\in\partial\big(\varphi+f_x\big)(p)
\nonumber\\
&\Leftrightarrow&0\in\partial\varphi(p)+\partial f(p)-\Nf(x)
\nonumber\\
&\Leftrightarrow&0\in\partial\varphi(p)+\Nf(p)-\Nf(x)
\nonumber\\
&\Leftrightarrow&\Nf(x)-\Nf(p)\in\partial\varphi(p)
\label{e:2011-04-13c}\\
&\Rightarrow&p\in\dom\partial\varphi\cap\IDD.
\label{e:2011-04-13d}
\end{eqnarray}
Hence, the minimizers of $\varphi+f_x$ are in $\IDD$. However,
since $f$ is essentially strictly convex, it is strictly convex 
on $\IDD$ and so is therefore $\varphi+f_x$. This shows
that $\varphi+f_x$ admits at most one minimizer.
It remains to establish existence. 

\ref{p:2011-04-13i}: 
It follows from \eqref{e:2011-04-13e} that, to show existence, it 
is enough to show that $\varphi+f_x$ is coercive 
\cite[Theorem~2.5.1(ii)]{Zali02}. In view of Lemma~\ref{l:7}, this
is equivalent to showing that $\Nf(x)\in\intdom(f+\varphi)^*$.
However, it follows from \eqref{e:2011-04-13a} and 
Lemma~\ref{l:9}\ref{l:9i} that
\begin{equation}
\label{e:2011-04-13f}
\intdom(f+\varphi)^*
=\intdom(f^*\infconv\varphi^*)
=\inte(\dom f^*+\dom\varphi^*).
\end{equation}

\ref{p:2011-04-13ii}$\Rightarrow$\ref{p:2011-04-13i}: 
Lemma~\ref{l:5}\ref{l:5iii}.

\ref{p:2011-04-13iii}$\Rightarrow$\ref{p:2011-04-13ii}: 
By Lemma~\ref{l:8}, $\dom f^*=\XX^*$ and, since 
$\dom\varphi^*\neq\emp$, 
$\intdom f^*\subset\inte(\dom f^*+\dom\varphi^*)$.

\ref{p:2011-04-13iv}$\Rightarrow$\ref{p:2011-04-13ii}: 
We have 
$\inf\varphi(\XX)>\minf$ $\Rightarrow$
$\varphi^*(0)=-\inf\varphi(\XX)<\pinf$ $\Rightarrow$ 
$0\in\dom\varphi^*$. Hence,
$\intdom f^*\subset\inte(\dom f^*+\dom\varphi^*)$.
\end{proof}

In view of Proposition~\ref{p:2011-04-13} and
Lemma~\ref{l:5}\ref{l:5iii}, the following is well 
defined.

\begin{definition}
\label{d:bprox}
Let $f\in\Gamma_0(\XX)$ be a Legendre function and let
$\varphi\in\Gamma_0(\XX)$ be such that
$0\in\sri(\dom f-\dom\varphi)$. Set 
\begin{equation}
E=(\IDD)\cap\big(\Nf^*\big(\inte(\dom f^*+\dom\varphi^*)\big)\big).
\end{equation}
The $D$-proximity (or Bregman proximity) operator of $\varphi$ 
relative to $f$ is 
\begin{equation}
\label{e:avril2011a}
\begin{aligned}
\bprox_\varphi^f\colon E\to\IDD\colon
x\mapsto\underset{y\in\XX}
{\operatorname{argmin}}\big(\varphi(y)+D_f(y,x)\big).
\end{aligned}
\end{equation}
\end{definition}

\begin{remark}
\label{r:2011-03-13a}
In connection with Definition~\ref{d:bprox}, let us make a couple
of observations.
\begin{enumerate}
\item
It follows from Proposition~\ref{p:2011-04-13} that,
if $\intdom f^*\subset\inte(\dom\varphi^*+\dom f^*)$ (in particular
if $f$ is supercoercive or if $\inf\varphi(\XX)>\minf$), then
$\bprox_\varphi^f\colon\IDD\to\IDD$.
\item
\label{r:2011-03-13aii}
Suppose that $\XX$ is hilbertian and that $f=\|\cdot\|^2/2$, and 
let $\varphi\in\Gamma_0(\XX)$. 
Then $\varphi\ldiamond f=\varphi\infconv f$ and 
$\bprox_\varphi^f=\prox_\varphi$.
\end{enumerate}
\end{remark}

\subsection{Anisotropic proximity operators}

An alternative extension of the notion of proximity can be 
obtained by replacing the function $\|\cdot\|^2/2$ in  
\eqref{e:2011-03-12a} by a Legendre function $f$.

\begin{proposition}
\label{p:2011-04-14}
Let $f\in\Gamma_0(\XX)$ be a Legendre function, let
$\varphi\in\Gamma_0(\XX)$ be such that
\begin{equation}
\label{e:2011-04-14b}
0\in\sri(\dom f^*-\dom\varphi^*),
\end{equation}
and let $x\in\sri(\dom f+\dom\varphi)$. 
Then there exists a unique point $p\in\XX$ such that 
$(\varphi\infconv f)(x)=\varphi(p)+f(x-p)$; moreover, $p$ is 
characterized by the inclusion
\begin{equation}
\label{e:2011-03-11}
\Nf(x-p)\in\partial\varphi(p).
\end{equation}
\end{proposition}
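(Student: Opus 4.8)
The plan is to mirror the structure of the proof of Proposition~\ref{p:2011-04-13}, but now the roles of $f$ and $f^*$ are interchanged because we are convolving with $f$ itself rather than with the Bregman distance $D_f(\cdot,x)$. The key observation is that $(\varphi\infconv f)(x)=\varphi(p)+f(x-p)$ for some $p$ if and only if the infimum in the definition \eqref{e:elnido2011-03-02f} is attained at $y=p$, i.e. $p$ minimizes the function $y\mapsto\varphi(y)+f(x-y)$ over \XX. So the entire proposition is an existence-and-uniqueness statement for minimizers of $g\colon y\mapsto\varphi(y)+f(x-y)$, together with a first-order characterization of the minimizer.

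First I would reduce the characterization \eqref{e:2011-03-11} to a subdifferential inclusion. Writing $\check f\colon y\mapsto f(x-y)$, the function $g=\varphi+\check f$, and a standard chain rule gives $\partial\check f(y)=-\Nf(x-y)$ wherever $f$ is differentiable at $x-y$. To apply the sum rule $\partial(\varphi+\check f)=\partial\varphi+\partial\check f$ I need a qualification condition; since $\dom\check f=x-\dom f$, the relevant hypothesis is $0\in\sri(\dom\varphi-(x-\dom f))=\sri(\dom\varphi+\dom f-x)$, which holds exactly because $x\in\sri(\dom f+\dom\varphi)$. Thus Lemma~\ref{l:9}\ref{l:9ii} applies and $p$ minimizes $g$ iff $0\in\partial\varphi(p)-\Nf(x-p)$, i.e. $\Nf(x-p)\in\partial\varphi(p)$, which is \eqref{e:2011-03-11}. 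This also forces $x-p\in\IDD$ via Lemma~\ref{l:5}\ref{l:5ii}, so $f$ is genuinely differentiable there and the chain-rule step is legitimate.

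For uniqueness I would argue as in Proposition~\ref{p:2011-04-13}: any minimizer $p$ satisfies $x-p\in\IDD$, on which $f$ is strictly convex (essential strict convexity), hence $\check f$ is strictly convex on $x-\IDD$ and $g=\varphi+\check f$ is strictly convex there; two distinct minimizers would contradict this. The main obstacle is \textbf{existence}, and this is where the hypotheses must be exploited. By Fenchel duality the value $(\varphi\infconv f)(x)$ equals $-(\varphi^*+f^*)^*(\,\cdot\,)$ evaluated appropriately, and attainment of the infimum is precisely exactness of the infimal convolution. The cleanest route is to invoke Lemma~\ref{l:9}\ref{l:9i}, applied to the dual pair $\varphi^*$ and $f^*$: the hypothesis \eqref{e:2011-04-14b}, namely $0\in\sri(\dom f^*-\dom\varphi^*)$, is exactly the Attouch--Br\'ezis qualification condition guaranteeing that $(\varphi^*+f^*)^*=\varphi^{**}\infconv f^{**}=\varphi\infconv f$ is exact everywhere. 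Hence for the given $x$ there exists $p$ with $(\varphi\infconv f)(x)=\varphi(p)+f(x-p)$, and by the characterization above this $p$ is the sought minimizer.

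The delicate point to verify carefully will be that exactness of the convolution yields a minimizer lying in the right set and that the biconjugation $\varphi^{**}=\varphi$, $f^{**}=f$ is valid, which holds since $\varphi,f\in\Gamma_0(\XX)$ and \XX\ is reflexive. One should also confirm that the condition $x\in\sri(\dom f+\dom\varphi)$ ensures $(\varphi\infconv f)(x)>\minf$ so that the value is a genuine real number and the asserted equality is meaningful; this is automatic once the dual qualification condition delivers exactness, since the common value then equals $\varphi(p)+f(x-p)>\minf$. Assembling these pieces—reduction to minimization, the sum rule under \eqref{e:2011-04-14b}'s primal counterpart for the characterization, strict convexity for uniqueness, and Lemma~\ref{l:9}\ref{l:9i} applied on the dual side for existence—completes the argument.
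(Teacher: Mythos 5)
Your proposal is correct and follows essentially the same route as the paper's proof: existence via Lemma~\ref{l:9}\ref{l:9i} applied to the dual pair $(\varphi^*,f^*)$ under \eqref{e:2011-04-14b}, the characterization \eqref{e:2011-03-11} via the sum rule of Lemma~\ref{l:9}\ref{l:9ii} (justified by $x\in\sri(\dom f+\dom\varphi)$) together with Lemma~\ref{l:5}\ref{l:5ii}, and uniqueness from strict convexity of $f$ on $\IDD$ applied to the midpoint of two putative minimizers. Only the order of the steps differs from the paper.
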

\begin{proof}
Using \eqref{e:2011-04-14b} and Lemma~\ref{l:9}\ref{l:9i}, we obtain
\begin{equation}
(\varphi^*+f^*)^*=\varphi^{**}\infconv f^{**}=\varphi\infconv f
\end{equation}
and the fact that the infimum in the infimal convolution is 
attained everywhere. 
On the other hand, since $x\in\sri(\dom f+\dom\varphi)$, we have
\begin{equation}
0\in\sri\big(\dom\varphi-(x-\dom f)\big)
=\sri\big(\dom\varphi-\dom f(x-\cdot)\big). 
\end{equation}
Consequently, by Lemma~\ref{l:9}\ref{l:9ii},
\begin{equation}
\label{e:sum-rule}
\partial\big(\varphi+f(x-\cdot)\big)
=\partial\varphi+\partial f(x-\cdot). 
\end{equation}
Now let $p\in\XX$. It follows from \eqref{e:sum-rule} and
Lemma~\ref{l:5}\ref{l:5ii} that
\begin{eqnarray}
p\;\:\text{minimizes}\;\:\varphi+f(x-\cdot)
&\Leftrightarrow&0\in\partial\big(\varphi+f(x-\cdot)\big)(p)
\nonumber\\
&\Leftrightarrow&0\in\partial\varphi(p)-\partial f(x-p)
\nonumber\\
&\Leftrightarrow&0\in\partial\varphi(p)-\Nf(x-p)
\nonumber\\
&\Leftrightarrow&\Nf(x-p)\in\partial\varphi(p)
\label{e:2011-04-26a}\\
&\Rightarrow&x-p\in\IDD.
\label{e:2011-04-26b}
\end{eqnarray}
To show uniqueness, suppose that $p$ and $q$ are two distinct
minimizers of $\varphi+f(x-\cdot)$. Then
$(\varphi\infconv f)(x)=\varphi(p)+f(x-p)=\varphi(q)+f(x-q)$ and,
by \eqref{e:2011-04-26b}, $x-p$ and $x-q$ lie in $\IDD$.
Now let $r=(1/2)(p+q)$ and suppose that $p\neq q$. 
Lemma~\ref{l:5}\ref{l:5ii} asserts that $f$ is strictly 
convex on the convex set $\IDD=\dom\partial f$.  Therefore,
invoking the convexity of $\varphi$,
\begin{align}
(\varphi\infconv f)(x)
&\leq\varphi(r)+f(x-r)\nonumber\\
&<\frac12\big(\varphi(p)+\varphi(q)\big)+
\frac12\big(f(x-p)+f(x-q)\big)\nonumber\\
&=(\varphi\infconv f)(x),
\end{align}
which is impossible.
\end{proof}

Using Proposition~\ref{p:2011-04-14}, we can now introduce the 
anisotropic proximity operator of $\varphi$.

\begin{definition}
\label{d:aprox}
Let $f\in\Gamma_0(\XX)$ be a Legendre function and let
$\varphi\in\Gamma_0(\XX)$ be such that
$0\in\sri(\dom f^*-\dom\varphi^*)$. Set 
\begin{equation}
E=\sri(\dom f+\dom\varphi).
\end{equation}
The anisotropic proximity operator of $\varphi$ 
relative to $f$ is 
\begin{equation}
\label{e:avril2011b}
\aprox_\varphi^f\colon E\to\XX\colon
x\mapsto\underset{y\in\XX}{\operatorname{argmin}}
\big(\varphi(y)+f(x-y)\big).
\end{equation}
\end{definition}

\begin{remark}
\label{r:2011-03-13b}
Suppose that $\XX$ is hilbertian and that $f=\|\cdot\|^2/2$, and 
let $\varphi\in\Gamma_0(\XX)$. Then 
$\aprox_\varphi^f=\prox_\varphi$.
\end{remark}

\section{Main result}
\label{sec:3}

In the previous section we have described two extensions of the
classical proximity operator. Our main result is a generalization 
of Moreau's decomposition (Proposition~\ref{p:3}) in Banach 
spaces which involves a mix of these two extensions.

\begin{theorem}
\label{t:puerto-princesa2011-03-06}
Let $f\in\Gamma_0(\XX)$ be a Legendre function, let
$\varphi\in\Gamma_0(\XX)$ be such that
\begin{equation}
\label{e:2011-04-15b}
0\in\sri(\dom f^*-\dom\varphi^*),
\end{equation}
and let $x\in(\IDD)\cap\inte(\dom f+\dom\varphi)$. 
Then the following hold.
\begin{enumerate}
\item
\label{t:puerto-princesa2011-03-06i}
$f(x)=(\varphi\infconv f)(x)+(\varphi^*\ldiamond f^*)
\big(\Nf(x)\big)$.
\item
\label{t:puerto-princesa2011-03-06iia}
$x=\aprox^f_\varphi x+\nabla f^*
\big(\bprox^{f^*}_{\varphi^*}\big(\nabla f(x)\big)\big)$.
\item
\label{t:puerto-princesa2011-03-06iib}
$\bpair{\aprox^f_\varphi x}{\bprox_{\varphi^*}^{f^*}
\big(\Nf(x)\big)}=\varphi\big(\aprox^f_\varphi x\big)+
\varphi^*\big(\bprox_{\varphi^*}^{f^*}\big(\Nf(x)\big)\big)$.
\item
\label{t:puerto-princesa2011-03-06iic}
$\bpair{\aprox^f_\varphi x}{\Nf\big(x-\aprox^f_\varphi x\big)}=
\varphi\big(\aprox^f_\varphi x\big)+
\varphi^*\big(\Nf\big(x-\aprox^f_\varphi x\big)\big)$.
\end{enumerate}
\end{theorem}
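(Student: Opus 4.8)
The plan is to route everything through the single primal point $p=\aprox^f_\varphi x$ and to show that the dual $D$-proximal point is nothing but $\Nf(x-p)$. First, since $\inte(\dom f+\dom\varphi)\subset\sri(\dom f+\dom\varphi)$, the point $x$ lies in $\sri(\dom f+\dom\varphi)$; hence by \eqref{e:2011-04-15b} and Proposition~\ref{p:2011-04-14} the point $p$ is well defined, $(\varphi\infconv f)(x)=\varphi(p)+f(x-p)$, one has $x-p\in\IDD$ [see \eqref{e:2011-04-26b}], and $p$ is characterized by $\Nf(x-p)\in\partial\varphi(p)$. Abbreviate $x^*=\Nf(x)$ and $p^*=\Nf(x-p)$; both belong to $\IDP$ by Lemma~\ref{l:5}\ref{l:5iii}.

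Next I would show that $\bprox^{f^*}_{\varphi^*}(x^*)$ is well defined and equals $p^*$. Apply Proposition~\ref{p:2011-04-13} with $f^*$ (Legendre by Lemma~\ref{l:5}\ref{l:5i}) in the role of $f$, with $\varphi^*$ in the role of $\varphi$, and with the point $x^*\in\IDP$ in the role of $x$. Then hypothesis~\eqref{e:2011-04-13a} becomes $0\in\sri(\dom f^*-\dom\varphi^*)$, which is exactly \eqref{e:2011-04-15b}. Since $\nabla f^*(x^*)=\nabla f^*(\Nf(x))=x$ by Lemma~\ref{l:5}\ref{l:5iii}, and since $(f^*)^*=f$ and $(\varphi^*)^*=\varphi$, condition~\ref{p:2011-04-13i} reads $x\in\inte(\dom f+\dom\varphi)$, which holds by assumption. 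Thus $\bprox^{f^*}_{\varphi^*}(x^*)$ exists, is unique, and is characterized by $x-\nabla f^*\big(\bprox^{f^*}_{\varphi^*}(x^*)\big)\in\partial\varphi^*\big(\bprox^{f^*}_{\varphi^*}(x^*)\big)$. The crux is to verify that the candidate $p^*$ satisfies this: using $\nabla f^*\circ\Nf=\Id$ on $\IDD$ one gets $\nabla f^*(p^*)=x-p$, so the membership to check is $p\in\partial\varphi^*(p^*)$, which by subdifferential inversion in $\Gamma_0$ is equivalent to $p^*\in\partial\varphi(p)$ --- already established. By uniqueness $\bprox^{f^*}_{\varphi^*}(\Nf(x))=p^*=\Nf(x-p)$, whence $\nabla f^*\big(\bprox^{f^*}_{\varphi^*}(\Nf(x))\big)=x-p$; this is exactly \ref{t:puerto-princesa2011-03-06iia}.

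Parts \ref{t:puerto-princesa2011-03-06iib} and \ref{t:puerto-princesa2011-03-06iic} then follow at once: since $p^*=\Nf(x-p)\in\partial\varphi(p)$, the Fenchel--Young equality gives $\varphi(p)+\varphi^*(p^*)=\pair{p}{p^*}$, and with $p=\aprox^f_\varphi x$ and $p^*=\bprox^{f^*}_{\varphi^*}(\Nf(x))=\Nf(x-p)$ this single identity is both asserted equalities. For \ref{t:puerto-princesa2011-03-06i} I would expand $(\varphi^*\ldiamond f^*)(x^*)=\varphi^*(p^*)+D_{f^*}(p^*,x^*)$ and evaluate the Bregman term as $D_{f^*}(p^*,x^*)=f^*(p^*)-f^*(x^*)-\pair{x}{p^*-x^*}$, substituting the Legendre conjugacy formulae $f^*(x^*)=\pair{x}{x^*}-f(x)$ and $f^*(p^*)=\pair{x-p}{p^*}-f(x-p)$. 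After cancellation this reduces to $D_{f^*}(p^*,x^*)=f(x)-f(x-p)-\pair{p}{p^*}$, so that
\[
(\varphi\infconv f)(x)+(\varphi^*\ldiamond f^*)(x^*)
=\varphi(p)+\varphi^*(p^*)-\pair{p}{p^*}+f(x)=f(x),
\]
again by Fenchel--Young.

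The step I expect to be the main obstacle is the middle one: establishing that the dual $D$-proximal point is well defined and coincides with $\Nf(x-p)$. This rests on correctly transposing the hypotheses of Proposition~\ref{p:2011-04-13} to the pair $(f^*,\varphi^*)$ --- in particular on using $\nabla f^*\circ\Nf=\Id$ (Lemma~\ref{l:5}\ref{l:5iii}) to turn condition~\ref{p:2011-04-13i} into the given membership $x\in\inte(\dom f+\dom\varphi)$ --- and on the subdifferential inversion relating $p^*\in\partial\varphi(p)$ to $p\in\partial\varphi^*(p^*)$. Once this coincidence is secured, the remaining parts are immediate consequences of Fenchel--Young and the conjugacy formulae for the Legendre function $f$.
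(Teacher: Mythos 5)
Your proposal is correct and follows essentially the same route as the paper: part \ref{t:puerto-princesa2011-03-06iia} via the characterizations \eqref{e:2011-03-11} and \eqref{e:2011-04-13b} together with the bijection $\Nf^*=(\Nf)^{-1}$ of Lemma~\ref{l:5}\ref{l:5iii} and subdifferential inversion, and parts \ref{t:puerto-princesa2011-03-06iib}--\ref{t:puerto-princesa2011-03-06iic} via Fenchel--Young applied to $\Nf(x-p)\in\partial\varphi(p)$. The only (harmless) deviation is in part \ref{t:puerto-princesa2011-03-06i}, where the paper computes $(\varphi^*\ldiamond f^*)(\Nf(x))=f(x)-(\varphi^*+f^*)^*(x)$ as an identity between infima and then invokes Lemma~\ref{l:9}\ref{l:9i}, whereas you evaluate both infima at their (already established) minimizers and cancel via Fenchel--Young; both are valid.
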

\begin{proof}
Since $x\in\inte(\dom f+\dom\varphi)$,
Lemma~\ref{l:5}\ref{l:5iii} yields
\begin{equation}
x\in\sri(\dom f+\dom\varphi)\quad\text{and}\quad
\Nf^*\big(\Nf(x)\big)\in\inte\big(\dom f^{**}+\dom\varphi^{**}\big).
\end{equation}
Hence, it follows from Proposition~\ref{p:2011-04-14} 
that $\aprox^f_\varphi x$ is well defined and, from
Lemma~\ref{l:5}\ref{l:5i} and 
Proposition~\ref{p:2011-04-13}\ref{p:2011-04-13i}
(applied to $f^*$ and $\varphi^*$), that 
$\Nf^*(\bprox_{\varphi^*}^{f^*}(\Nf(x)))$ is well defined.
In addition,
\begin{equation}
\label{e:2011-04-27}
(\varphi\infconv f)(x)\in\RR
\quad\text{and}\quad
(\varphi^*\ldiamond f^*)\big(\Nf(x)\big)\in\RR.
\end{equation}

\ref{t:puerto-princesa2011-03-06i}:
It follows from \eqref{e:elnido2011-03-05}, 
Lemma~\ref{l:5}\ref{l:5iii}, and the Fenchel-Young identity 
\cite[Theorem~2.4.2(iii)]{Zali02} that
\begin{align} 
(\forall x^*\in\XX^*)\quad
D_{f^*}\big(x^*,\Nf(x)\big)  
&=f^*( x^*)-f^*\big(\Nf(x)\big)-\pair{x^*-\Nf(x)}{x}_*\nonumber\\
&=f^*(x^*)+f(x)-\pair{x^*}{x}_*.
\end{align}
This, \eqref{e:inf-convo-breg}, \eqref{e:2011-04-15b}, and 
Lemma~\ref{l:9}\ref{l:9i} imply that 
\begin{align}
(\varphi^*\ldiamond f^*)\big(\Nf (x)\big) 
&=\inf_{x^*\in\XX^*}\big(\varphi^*(x^*)+f^*(x^*)+f(x)-
\pair{x^*}{x}_*\big)\nonumber\\
&=f(x)-\sup_{x^*\in\XX^*}\big(\pair{x^*}{x}_*-\varphi^*(x^*)-
f^*(x^*)\big)\nonumber\\
&=f(x)-(\varphi^*+f^*)^*(x)\nonumber\\
&=f(x)-(\varphi\infconv f)(x).
\end{align}
In view of \eqref{e:2011-04-27}, we obtain the announced identity.

\ref{t:puerto-princesa2011-03-06iia}:
Let $p\in\XX$. Using Proposition~\ref{p:2011-04-14},
Lemma~\ref{l:5}\ref{l:5iii}, and
Proposition~\ref{p:2011-04-13}\ref{p:2011-04-13i}, we obtain
\begin{eqnarray}
p=\aprox^f_\varphi x
&\Leftrightarrow&\Nf(x-p)\in\partial\varphi(p)
\label{e:puerto-princesa2011-03-06a}\\
&\Leftrightarrow&
p\in\partial\varphi^*\big(\Nf(x-p)\big)\nonumber\\
&\Leftrightarrow&\Nf^*\big(\Nf(x)\big)-\Nf^*\big(\Nf(x-p)\big)\in
\partial\varphi^*\big(\Nf(x-p)\big)\nonumber\\
&\Leftrightarrow&\Nf(x-p)=\bprox_{\varphi^*}^{f^*}\big(\Nf(x)\big)
\label{e:puerto-princesa2011-03-06b}\\
&\Leftrightarrow&x-p=\Nf^*\big(\bprox_{\varphi^*}^{f^*}
\big(\Nf(x)\big)\big).
\end{eqnarray}

\ref{t:puerto-princesa2011-03-06iib}:
Set $p=\aprox^f_\varphi x$. As seen in 
\eqref{e:puerto-princesa2011-03-06b} and
\eqref{e:puerto-princesa2011-03-06a},
\begin{equation}
\label{e:puerto-princesa2011-03-06c}
\bprox_{\varphi^*}^{f^*}\big(\Nf(x)\big)=\Nf(x-p)\in
\partial\varphi(p).
\end{equation}
Hence, the Fenchel-Young identity yields
\begin{align}
\label{e:2010-11-04h}
\pair{p}{\bprox_{\varphi^*}^{f^*}\big(\Nf(x)\big)}
&=\pair{p}{\Nf(x-p)}\nonumber\\
&=\varphi(p)+\varphi^*\big(\Nf(x-p)\big)\nonumber\\
&=\varphi(p)+\varphi^*\big(\bprox_{\varphi^*}^{f^*}
\big(\Nf(x)\big)\big).
\end{align}

\ref{t:puerto-princesa2011-03-06iic}: This follows at once from
\ref{t:puerto-princesa2011-03-06iib} and 
\eqref{e:puerto-princesa2011-03-06c}.
\end{proof}

Theorem~\ref{t:puerto-princesa2011-03-06} provides a range of new
decomposition schemes, even in the case when $\XX$ is a Hilbert 
space. Thus, in the following result, we obtain a new hilbertian
frame decomposition principle (for background on frames and their 
applications, see \cite{Chri08}). 

\begin{corollary}
\label{c:frames}
Suppose that $\XX$ is a separable Hilbert space, let $I$ be a
countable set, and let $(e_i)_{i\in I}$ be a frame in $\XX$, i.e., 
\begin{equation}
\label{e:frame-bounds}
(\exi\alpha\in\RPP)(\exi\beta\in\RPP)(\forall x\in\XX)\quad
\alpha\|x\|^2\leq\sum_{i\in I}|\pair{x}{e_i}|^2\leq\beta\|x\|^2.
\end{equation}
Let $S\colon\XX\to\XX\colon x\mapsto\sum_{i\in I}
\pair{x}{e_i}e_i$ be the associated frame operator and let
$(e^*_i)_{i\in I}=(S^{-1}e_i)_{i\in I}$ be the associated 
canonical dual frame. Furthermore, let $\varphi\in\Gamma_0(\XX)$,
let $x\in\XX$, and set 
\begin{equation}
\label{e:frame2}
a(x)=\underset{y\in \XX}{\operatorname{argmin}} 
\left(\varphi(y)+\frac{1}{2}\sum_{i\in I}|\pair{x-y}{e_i}|^2
\right)
\end{equation}
and
\begin{equation}
\label{e:frame3}
b(x)=\underset{x^*\in\XX}{\operatorname{argmin}}
\left(\varphi^*(x^*)-\pair{x^*}{x}+\frac{1}{2}\sum_{i\in I}
|\pair{x^*}{e^*_i}|^2\right).
\end{equation}
Then $x=a(x)+\sum_{i\in I}\pair{b(x)}{e^*_i}e^*_i$.
\end{corollary}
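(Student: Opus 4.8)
The plan is to obtain the corollary as a specialisation of Theorem~\ref{t:puerto-princesa2011-03-06}\ref{t:puerto-princesa2011-03-06iia}, for a Legendre function $f$ tailored to the frame. Identifying $\XX^*$ with $\XX$ through the Riesz isomorphism, I first note that the frame operator $S\colon y\mapsto\sum_{i\in I}\pair{y}{e_i}e_i$ is self-adjoint and, by \eqref{e:frame-bounds}, satisfies $\alpha\|y\|^2\leq\pair{Sy}{y}=\sum_{i\in I}|\pair{y}{e_i}|^2\leq\beta\|y\|^2$ for every $y\in\XX$; hence $S$ is bounded, strictly positive, and boundedly invertible. I would therefore set
\[
f\colon\XX\to\RR\colon y\mapsto\frac12\pair{Sy}{y}
=\frac12\sum_{i\in I}|\pair{y}{e_i}|^2 .
\]
As a continuous, everywhere-finite, strictly convex quadratic form, $f$ belongs to $\Gamma_0(\XX)$ and is Legendre, with $\Nf=S$, $\dom f=\XX$, and conjugate $f^*\colon y^*\mapsto\frac12\pair{S^{-1}y^*}{y^*}$, so that $\dom f^*=\XX$ and $\Nf^*=S^{-1}$. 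Since $\dom f$ and $\dom f^*$ are all of $\XX$, the hypotheses \eqref{e:2011-04-15b} and $x\in(\IDD)\cap\inte(\dom f+\dom\varphi)$ hold for every $x\in\XX$.

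Next I would match the two frame minimisations \eqref{e:frame2}--\eqref{e:frame3} to the two proximity operators appearing in the theorem. Because $f(x-y)=\frac12\sum_{i\in I}|\pair{x-y}{e_i}|^2$, comparison with \eqref{e:avril2011b} gives immediately $a(x)=\aprox^f_\varphi x$. For $b(x)$ the key is to uncover the Bregman proximal structure. Using $e^*_i=S^{-1}e_i$ and the self-adjointness of $S$, one has $\pair{x^*}{e^*_i}=\pair{S^{-1}x^*}{e_i}$, whence $\frac12\sum_{i\in I}|\pair{x^*}{e^*_i}|^2=\frac12\pair{S^{-1}x^*}{x^*}=f^*(x^*)$; thus $b(x)$ minimises $x^*\mapsto\varphi^*(x^*)+f^*(x^*)-\pair{x^*}{x}$. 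On the other hand, evaluating the Bregman distance of $f^*$ at the base point $\Nf(x)=Sx$ and using $\Nf^*(Sx)=x$ together with $f^*(Sx)=f(x)$ gives
\[
D_{f^*}\big(x^*,\Nf(x)\big)=f^*(x^*)+f(x)-\pair{x^*}{x}.
\]
Since $f(x)$ is constant in $x^*$, the objective defining $b(x)$ and the objective $\varphi^*+D_{f^*}(\cdot,\Nf(x))$ have the same minimiser, so $b(x)=\bprox^{f^*}_{\varphi^*}\big(\Nf(x)\big)$.

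With these identifications in hand, Theorem~\ref{t:puerto-princesa2011-03-06}\ref{t:puerto-princesa2011-03-06iia} yields
\[
x=\aprox^f_\varphi x+\Nf^*\big(\bprox^{f^*}_{\varphi^*}\big(\Nf(x)\big)\big)
=a(x)+S^{-1}b(x),
\]
so it only remains to rewrite $S^{-1}b(x)$ in dual-frame coordinates. Writing $u=b(x)$ and again using $e^*_i=S^{-1}e_i$, I would compute $\sum_{i\in I}\pair{u}{e^*_i}e^*_i=S^{-1}\sum_{i\in I}\pair{S^{-1}u}{e_i}e_i=S^{-1}\big(S(S^{-1}u)\big)=S^{-1}u$, which is precisely the reconstruction formula associated with the canonical dual frame. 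Hence $S^{-1}b(x)=\sum_{i\in I}\pair{b(x)}{e^*_i}e^*_i$ and the announced identity follows. I do not anticipate a genuine obstacle here, since the whole argument is a specialisation; the only step calling for care is the bookkeeping of the second paragraph, namely checking that the linear term $-\pair{x^*}{x}$ in \eqref{e:frame3} is exactly what converts $\varphi^*+f^*$ into $\varphi^*+D_{f^*}(\cdot,\Nf(x))$ up to an additive constant, so that $b(x)$ is genuinely the Bregman proximal point $\bprox^{f^*}_{\varphi^*}(\Nf(x))$.
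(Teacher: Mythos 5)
Your proposal is correct and follows essentially the same route as the paper: it specializes Theorem~\ref{t:puerto-princesa2011-03-06}\ref{t:puerto-princesa2011-03-06iia} to the Legendre function $f=\frac12\pair{S\cdot}{\cdot}$, computes $f^*=\frac12\pair{S^{-1}\cdot}{\cdot}$ with $\Nf=S$ and $\Nf^*=S^{-1}$, and identifies $a(x)=\aprox^f_\varphi x$ and $b(x)=\bprox^{f^*}_{\varphi^*}(Sx)$. The only cosmetic differences are that you obtain $f^*$ by direct inspection of the quadratic form rather than by minimizing $f-\pair{\cdot}{x^*}$, and you spell out the Bregman-distance bookkeeping and the dual-frame reconstruction $S^{-1}b(x)=\sum_{i\in I}\pair{b(x)}{e^*_i}e^*_i$, which the paper leaves implicit.
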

\begin{proof}
Set $f\colon\XX\to\RR\colon x\mapsto(1/2)
\sum_{i\in I}|\pair{x}{e_i}|^2$. It is easily seen that $f$ is 
Fr\'echet differentiable on $\XX$ with $\Nf=S$. It therefore
follows from \cite[Theorem~5.6]{Ccm101} that $f$ is essentially
smooth. Now fix $x^*\in\XX$. Since the frame operator of 
$(e_i^*)_{i\in I}$ is $S^{-1}$ \cite[Lemma~5.1.6]{Chri08}, we have
\begin{equation}
\label{e:2011-04-18c}
\pair{S^{-1}x^*}{x^*}=
\bpair{\sum_{i\in I}\pair{x^*}{e_i^*}e_i^*}{x^*}
=\sum_{i\in I}|\pair{x^*}{e_i^*}|^2=2f(S^{-1}x^*).
\end{equation}
Now set $g\colon\XX\to\RR\colon x\mapsto f(x)-\pair{x}{x^*}$.
Then $g$ is a differentiable convex function and 
$\nabla g\colon x\mapsto Sx-x^*$ vanishes at $x=S^{-1}x^*$.
Hence, using \eqref{e:2011-04-18c}, we obtain
\begin{equation}
\label{e:conju-frame}
f^*(x^*)=-\min_{x\in\XX}g(x)=\pair{S^{-1}x^*}{x^*}-f(S^{-1}x^*)
=f(S^{-1}x^*)=\frac12\sum_{i\in I}|\pair{x^*}{e^*_i}|^2.
\end{equation}
Hence, as above, $f^*$ is Fr\'echet differentiable on $\XX$ with 
$\Nf^*=S^{-1}$ and, in turn, essentially smooth, which makes $f$ 
essentially strictly convex \cite[Theorem~5.4]{Ccm101}. Altogether,
$f$ is a Legendre function with 
\begin{equation}
\label{e:2011-04-18d}
\dom f=\XX,\quad\dom f^*=\XX,\quad\Nf=S,
\quad\text{and}\quad\Nf^*=S^{-1}.
\end{equation}
Moreover, 
it follows from \eqref{e:avril2011a}, \eqref{e:avril2011b}, 
\eqref{e:2011-04-18d}, Lemma~\ref{l:5}\ref{l:5iii}, \eqref{e:frame2},
\eqref{e:frame3}, and \eqref{e:conju-frame} that
\begin{equation}
\bprox_{\varphi^*}^{f^*}(\Nf(x))=b(x)
\quad\text{and}\quad
\aprox_{\varphi}^{f}(x)=a(x).
\end{equation}
The result is therefore an application of 
Theorem~\ref{t:puerto-princesa2011-03-06}%
\ref{t:puerto-princesa2011-03-06iia}. 
\end{proof}

\begin{remark}
\label{r:2011-04-18}
Corollary~\ref{c:frames} can be regarded as an extension of Moreau's
decomposition principle in separable Hilbert spaces. Indeed, in the
special case when $(e_i)_{i\in I}$ is an orthonormal basis in
Corollary~\ref{c:frames}, we recover Proposition~\ref{p:3}\ref{p:3i}.
\end{remark}

The next application is set in uniformly convex and uniformly smooth
Banach spaces.
\begin{corollary}
\label{c:puerto-princesa2011-03-06}
Suppose that $\XX$ is uniformly convex and uniformly smooth, 
let $J$ be its duality mapping, set $q=\|\cdot\|^2/2$, and 
let $\varphi\in\Gamma_0(\XX)$.
Then $q^*=\|\cdot\|^2_*/2$ and the following hold for every 
$x\in\XX$.
\begin{enumerate}
\item
\label{c:puerto-princesa2011-03-06ii}
$q(x)=(\varphi\infconv q)(x)+(\varphi^*\ldiamond q^*)(Jx)$.
\item
\label{c:puerto-princesa2011-03-06i}
$x=\aprox^q_\varphi x+
J^{-1}\big(\bprox^{q^*}_{\varphi^*}(Jx)\big)$.
\item
\label{c:puerto-princesa2011-03-06iii}
$\bpair{\aprox^q_\varphi x}{\bprox_{\varphi^*}^{q^*}(Jx)}=
\varphi\big(\aprox^q_\varphi x\big)+
\varphi^*\big(\bprox_{\varphi^*}^{q^*}(Jx)\big)$.
\item
\label{c:puerto-princesa2011-03-06iv}
$\bpair{\aprox^q_\varphi x}{J\big(x-\aprox^q_\varphi x\big)}=
\varphi\big(\aprox^q_\varphi x\big)+
\varphi^*\big(J\big(x-\aprox^q_\varphi x\big)\big)$.
\end{enumerate}
\end{corollary}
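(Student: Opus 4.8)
The plan is to obtain Corollary~\ref{c:puerto-princesa2011-03-06} as the specialization of Theorem~\ref{t:puerto-princesa2011-03-06} to the choice $f=q=\|\cdot\|^2/2$. The whole task then reduces to two things: verifying that $q$ is a Legendre function with $\dom q=\XX$, $\dom q^*=\XX^*$, $\nabla q=J$, $\nabla q^*=J^{-1}$, and $q^*=\|\cdot\|^2_*/2$; and checking that the hypotheses of the theorem hold for every $x\in\XX$. Once this is done, the four identities are read off by substituting $\nabla f=J$ and $\nabla f^*=J^{-1}$ into the four assertions of the theorem, so no further computation is required.

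First I would settle the structure of $q$. Since $\XX$ is uniformly smooth, the norm is Fr\'echet differentiable off the origin, so $q$ is Fr\'echet differentiable on all of $\XX$ with $\nabla q=J$; as $J$ is then single-valued and locally bounded, \cite[Theorem~5.6]{Ccm101} shows $q$ to be essentially smooth. Because $\XX$ is both uniformly convex and uniformly smooth, so is $\XX^*$, and the same reasoning applied in $\XX^*$ shows that $q^*$ is Fr\'echet differentiable on $\XX^*$ with $\nabla q^*$ equal to the duality mapping of $\XX^*$, which is $J^{-1}$ by reflexivity; hence $q^*$ is essentially smooth, and \cite[Theorem~5.4]{Ccm101} makes $q$ essentially strictly convex. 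Thus $q$ is a Legendre function. To pin down $q^*$ itself I would use the Fenchel--Young identity at a point of differentiability: for $x^*=Jx$ one has $q^*(x^*)=\pair{x}{Jx}-q(x)=\|x\|^2/2=\|Jx\|^2_*/2$ by \eqref{e:dualitymap}, and since $J$ is a bijection onto $\XX^*$ this yields $q^*=\|\cdot\|^2_*/2$ and $\dom q^*=\XX^*$.

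Next I would verify the hypotheses of Theorem~\ref{t:puerto-princesa2011-03-06}, which now collapse to triviality. Since $\dom q^*=\XX^*$ and $\dom\varphi^*\neq\emp$, we have $\dom q^*-\dom\varphi^*=\XX^*$, so $0\in\sri(\dom q^*-\dom\varphi^*)$ and \eqref{e:2011-04-15b} holds. Likewise $\inte\dom q=\XX$ and $\inte(\dom q+\dom\varphi)=\XX$ because $\dom\varphi\neq\emp$, so every $x\in\XX$ lies in $(\inte\dom q)\cap\inte(\dom q+\dom\varphi)$. Hence the theorem applies for each $x\in\XX$, and substituting $\nabla q=J$ and $\nabla q^*=J^{-1}$ turns its parts \ref{t:puerto-princesa2011-03-06i}--\ref{t:puerto-princesa2011-03-06iic} verbatim into parts \ref{c:puerto-princesa2011-03-06ii}--\ref{c:puerto-princesa2011-03-06iv} of the corollary.

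I expect the only real obstacle to be front-loaded, namely establishing that $q$ is Legendre in the Banach setting---in particular essential strict convexity, which I reach indirectly through essential smoothness of $q^*$ and the duality recorded in \cite[Theorem~5.4]{Ccm101}---together with the explicit identification of $q^*$, $\nabla q$, and $\nabla q^*$. After that, the decomposition identities demand no computation, since the strong-relative-interior and interior conditions of the theorem become vacuous once both $\dom q$ and $\dom q^*$ equal the whole space.
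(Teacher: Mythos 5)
Your proposal is correct and follows exactly the paper's route: the corollary is obtained by applying Theorem~\ref{t:puerto-princesa2011-03-06} with $f=q$, after noting $\dom q=\XX$, $\dom q^*=\XX^*$, and $\nabla q=J$. You simply spell out in more detail the verification that $q$ is Legendre and the identification $q^*=\|\cdot\|_*^2/2$, which the paper leaves implicit.
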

\begin{proof}
This is an application of Theorem~\ref{t:puerto-princesa2011-03-06} 
with $f=q$. Indeed, $\dom f=\XX$, $\dom f^*=\XX^*$, and $\Nf=J$.
\end{proof}

In particular, if $\XX$ is a Hilbert space in 
Corollary~\ref{c:puerto-princesa2011-03-06}, if follows from
Remark~\ref{r:2011-03-13a}\ref{r:2011-03-13aii} and 
Remark~\ref{r:2011-03-13b} that we 
recover Moreau's decomposition principle (Proposition~\ref{p:3}) and
a fortiori Propositions~\ref{p:1} and \ref{p:2}.
Another noteworthy instance of 
Corollary~\ref{c:puerto-princesa2011-03-06}
is when $\varphi=\iota_K$, where $K$ is a nonempty closed convex 
cone in $\XX$. In this case, $\varphi^*=\iota_{K^\ominus}$, 
$\aprox^q_\varphi=P_K$, and we derive
from \eqref{e:gproj} and \eqref{e:dualitymap} that 
$\bprox^q_\varphi=\Pi_K$. Hence,
Corollary~\ref{c:puerto-princesa2011-03-06}%
\ref{c:puerto-princesa2011-03-06i}\&%
\ref{c:puerto-princesa2011-03-06iii} yields Proposition~\ref{p:4}.

\begin{remark}
\label{r:2011-03-16}
Consider the setting of 
Theorem~\ref{t:puerto-princesa2011-03-06} 
and set $A=\partial\varphi$. Then, 
by Rockafellar's theorem, $A$ is a maximally monotone operator
\cite[Theorem~3.1.11]{Zali02}. Moreover, it follows from 
\eqref{e:2011-03-11}, Lemma~\ref{l:5}\ref{l:5iii},
and \eqref{e:2011-04-13b} that we can rewrite 
Theorem~\ref{t:puerto-princesa2011-03-06}%
\ref{t:puerto-princesa2011-03-06iia} as 
\begin{equation}
\label{e:2011-03-16j}
x=(\Id+\nabla f^*\circ A)^{-1}x+
\nabla f^*\circ\big(\Nf^*+A^{-1}\big)^{-1}x,
\end{equation}
where $\Id$ is the identity operator on $\XX$.
The results of \cite[Section~3.3]{Sico03} suggest that this
decomposition holds for more general maximally monotone
operators $A\colon\XX\to 2^{\XX^*}$. 
If $\XX$ is a Hilbert space and
$f=\|\cdot\|^2/2$, \eqref{e:2011-03-16j} yields the well-known
resolvent identity $\Id=(\Id+A)^{-1}+(\Id+A^{-1})^{-1}$,
which is true for any maximally monotone operator $A$
\cite[Proposition~23.18]{Livre1}.
\end{remark}

\noindent
{\bfseries Acknowledgement.} 
The work of the first author was supported by the Agence 
Nationale de la Recherche under grant ANR-08-BLAN-0294-02.
The authors thank one of the referees for making some 
valuable comments.

\end{document}